\theoremstyle{plain}
\newtheorem{theorem}{Theorem}\setcounter{theorem}{-1}
\newtheorem{corollary}{Corollary}
\newtheorem{lemma}{Lemma}
\newtheorem{algorithm}{Algorithm}
\newtheorem{fact}{Number Theory Fact}
\newtheorem{definition}{Definition}
\newtheorem*{remark}{Remark}
\newcommand{\lp}{\left}
\newcommand{\rp}{\right}
\newcommand{\ra}{\rightarrow}
\newcommand{\NN}{\mathcal{N}}
\newcommand{\BB}{\mathcal{B}}
\newcommand{\MM}{\mathcal{M}}
\newcommand{\PP}{\mathcal{P}}
\newcommand{\EE}{\mathcal{E}}
\newcommand{\E}{\mathbb{E}}
\newcommand{\I}{\mathbb{I}}
\newcommand{\R}{\mathbb{R}}
\newcommand{\N}{\mathbb{N}}
\newcommand{\F}{\mathbb{F}}
\renewcommand{\P}{\mathbb{P}}
\begin{document}

\title{On the distribution of very short character sums}

\author[P.~Nosal]{Pawe{\l}~Nosal}
\address{University of Warwick, Mathematics Institute, Zeeman Building, University of Warwick, Coventry CV4 7AL}
\email{Pawe.Nosal@warwick.ac.uk}
\urladdr{\url{https://sites.google.com/view/pawelnosalmaths}}

\thanks{The author is supported by the Warwick Mathematics Institute Centre for Doctoral Training, and gratefully acknowledges funding from the Heilbronn Institute for Mathematical Research (HIMR) and the UK Engineering and Physical Sciences Research Council under the "Additional Funding Programme for Mathematical Sciences" (Grant number: EP/V521917/1)}

\maketitle

\begin{abstract}
    We establish a central limit theorem of $\tfrac{1}{\sqrt{h_p}}\sum_{X< n \leq X+h_p}\lp(\tfrac{n}{p}\rp)$ for almost all the primes $p$, with $X$ uniformly random in $[g(p)]$, $g(p)$ an arbitrary divergent function growing slower than any power of $p$, provided $(\log h_p)/(\log g(p))\ra 0, h_p \ra \infty$ as $p \ra \infty$. This improves the recent results of Basak, Nath and Zaharescu, who established this for $g(p) = (\log p)^A, A>1$. We also use the best currently available tools to expand the original central limit theorem of Davenport and Erd\H{o}s for all the primes to a shorter interval of starting points.\\
    In this paper we exploit a Selberg's sieve argument, recently used by Harper, an intersection result due to Evertse and Silverman and some consequences of the Weil bound on general character sums. 
\end{abstract}

\section{Introduction}
Let $q$ be a prime number. Obviously, the least quadratic residue modulo any prime is  $1$. No such information is in general offered for quadratic non-residues (further denoted NQR) and the only trivial observation that we can make is that the first NQR needs to be a prime smaller than $q$ (by complete multiplicativity of Dirichlet characters and their orthogonality relation). Detection of the least NQR is one of the main motivations for study of sums of the type 
\begin{equation*}
    \sum_{n \leq h}\chi_q(n),
\end{equation*}where $\chi_q$ is the Dirichlet character realized by the Legendre symbol $(\tfrac{n}{q})$. Being able to non-trivially bound short character sums immediately gives us a way to detect the least quadratic non-residues, simply by finding the first $h$ when $$\sum_{n \leq h}\chi_q(n)< \lfloor h \rfloor.$$In general, the study of partial sums of sequences has an absolutely critical role in analytic number theory, as via standard techniques, like Perron's formula, these are directly connected to the behaviour of their Dirichlet series.
In this paper the object of our attention are character sums with varying starting point $$S_h(x,q): = \sum_{x < n \leq x+h}\chi_q(n),$$ where $q$, the modulus of the character is huge and $h =o(q)$. Note that since $\chi(n)$ is periodic with period $q$, there is no need to consider sums with $h>q$. 

\subsection{Remark on notation}
Throughout the paper we adopt the usual asymptotic notation. We write $f \ll g$, in the Vinogradov notation, or $f = O(g)$ in the \textit{big O} notation to mean $|f(x)| \leq C g(x)$, for some constant $C>0,$ as $x \ra \infty$. Similarly, we write $f(x) = o(g(x))$ to signify that $|(f/g)(x)| \ra 0$ as $x \ra \infty$. 
For any $X \in \R, k \in \N_{\geq 0}$ we shall also write 
\begin{equation*}
    [X] : = \{1,2,...,\lfloor X\rfloor\}
\end{equation*} and 
\begin{equation*}
    [X]^{k} : = \{ x \in \R^{k}\text{ with coefficients in } [X]\}.
\end{equation*}Additionally, for convenience of comparison we establish notation closely resembling that of \cite{BasakNathZaharescu}. Let $\PP$ be the set of primes, $\eta \geq 0.01 $ be an arbitrary constant, $h_q$ depends on a prime $q \in \PP$. Given a subset of primes $\BB \subseteq \PP$ we say it is $\eta$-sufficient if  
    \begin{equation}\label{eta_strong}
        \lp|(\PP\backslash\BB)\cap[X, X+X^{\eta}]\rp|=o\lp( \frac{X^{\eta}}{\log X}\rp),
    \end{equation}holds for all $X$ sufficiently large. We will say $\BB$ is $\eta$-strong if for some $\delta>0$
    \begin{equation*}
         \lp|(\PP\backslash\BB)\cap[X, X+X^{\eta}]\rp| \leq \frac{X^{\eta}}{(\log X)^{1+\delta}}
    \end{equation*}holds for all $X$ sufficiently large. Now, let $g(q)$ be any function that diverges to infinity. Further, for any real number $\lambda$ define
    \begin{equation*}
        \MM_{g, q, h_q}(\lambda) : = \frac{1}{g(q)}\lp|1 \leq m \leq g(q); S_{h_q}(m,q) \leq \lambda \sqrt{h_q} \rp|.
    \end{equation*}Finally, we define the extended Rademacher random multiplicative function:
\begin{definition}{Extended Rademacher random multiplicative functions}\\
Let $(f(q))_{q \in \PP}$ be a set of independent random variables indexed by primes, such that for any $q \in \PP, f(q) = \pm 1$ with equal probability. Then, for any $n \in \N$ define 
\begin{equation*}
    f(n) : = \prod_{q^{\alpha_q}\mid \mid n }f(q)^{\alpha_q},
\end{equation*} and $f(1) = 1$, so that $f(n)$ is completely multiplicative. We call $f(n)$ the extended Rademacher random multiplicative function. 
\end{definition}

\subsection{Historical results and motivation}
The first non-trivial result on distribution of character sums was proven independently by P\'olya and Ivan Vinogradov:
\begin{equation}\label{polya_vinogradov}
    \sum_{n \leq h}\chi(n) \ll \sqrt{q}\log q,
\end{equation}where $\chi$ is any non-principal Dirichlet character of conductor $q$. In the paper of Vinogradov, he also came up with a method, which provided a character sum bound for Legendre symbols like the one above, allows us to get an additional saving for the size of the least NQR. He showed that the least NQR is of order $$\ll q^{\frac{1}{2\sqrt{e}}}(\log q)^2$$ and conjectured that its actual size is 
\begin{equation}\label{Vinogradov's_conjecture}
    \ll_{\epsilon} q^{\epsilon}, 
\end{equation}for all $\epsilon>0$. Following this, the next improvement was due to Davenport and Erd\H{o}s \cite{Davenport2022TheDO}, who removed a $2-1/\sqrt{e}$ factors of $\log q$ from Vinogradov's bound of least quadratic NQR. After that, Burgess \cite{Burgess_1957} improved the character sum bound to 
\begin{equation*}
    \sum_{n \leq H}\chi(n) \ll q^{1/4}\log q
\end{equation*} and used Vinogradov's trick to obtain the last unconditional improvement of the general bound for the least NQR:
\begin{equation*}
    \ll_{\epsilon} q^{\frac{1}{4\sqrt{e}}+\epsilon},
\end{equation*}for any fixed $\epsilon$. Improving this bound remains an important problem in analytic number theory. Worth mentioning are the conditional results of Ankeny \cite{Ankeny1952TheLQ}, who showed that under the Generalized Riemann Hypothesis the least NQR is $\ll (\log q)^2$, and Lamzouri et al. \cite{Lamzouri_Li_Sound} who made this bound explicit with implied constant equal to one. However, in this paper we will be concerned with another type of results, first of which appeared in the aforementioned paper of Davenport and Erd\H{o}s \cite{Davenport2022TheDO}. They establish, that if $X$ is uniformly random $\in \{0,1,...,q-1\}$ and $$ \frac{\log h_q}{\log q} \ra 0, h_q \ra \infty$$ with $q$, then the normalized sums of lengths $h_q$ with a starting point at $X$ converge in distribution to a standard normal
\begin{equation}\label{distributional_convergence}
    \frac{\sum_{X < n \leq X+h_q}\chi_q(n)}{\sqrt{h_q}} \ra_D \NN(0,1),
\end{equation}as the conductor $q \ra \infty$ and $\chi_q$ is the Legendre symbol. This type of central limit theorem results are the main focus of our paper. In this setting, short character sums were studied by Chan, Choi and Zaharescu \cite{Chan2003AMV} who established a multidimensional version of \eqref{distributional_convergence}, Lamzouri \cite{Lamzouri2011TheDO}, who extended it to more general Dirichlet characters, Harper \cite{harper2022notecharactersumsshort}, who showed that for some extremal ranges of $H$,  \eqref{distributional_convergence} is true for almost all the primes, but \textbf{not} all the primes, and Basak, Nath and Zaharescu \cite{BasakNathZaharescu} who showed \eqref{distributional_convergence} for almost all the primes, with $X$ uniformly random in interval of length $(\log q)^A, A>1$. All of these results employ the method of moments to establish desired distributional convergence and it will be the same in our case. We note in passing, that long character sums with $h_q \approx q$ have been studied in depth for example in \cite{Bober_Goldmakher_Granville_Koukoulopoulos} or \cite{Granville_Sound1999LargeCS, Granville2015LargeCS,Granville_Sound2005LargeCS}. This paper was motivated by the aforementioned work of Basak, Nath and Zaharescu \cite{BasakNathZaharescu} on the distribution of short character sums with moving starting point and the paper of Harper \cite{harper2022notecharactersumsshort}. Authors of \cite{BasakNathZaharescu} managed to prove that if one takes the limit in equation \eqref{distributional_convergence} along any but a small, badly behaved set of primes, then one can prove suitable central limit theorem while having the starting point chosen uniformly at random from a set of length $(\log q)^A, A>1$. 

\begin{theorem}{(Basak-Nath-Zaharescu, 2023)}\label{BNZ_Theorem}\
Let $\eta \in (1/2, 1], A>1$ be arbitrary constants. For each prime $q$ choose an integer $h_q$ such that $h_q \ra \infty$ and $\tfrac{\log h_q}{\log \log q} \ra 0$ as $q\ra \infty$. Then, there exists an $\eta$-strong set $\BB$ of primes such that for any $\lambda \in \R$ 
\begin{equation*}
    \lim_{\substack{q \in \BB\\q \ra \infty}} \MM_{(\log q)^A,q,h_{q}}(\lambda) = \frac{1}{\sqrt{2\pi}}\int_{-\infty}^{\lambda}e^{-t^2/2}dt.
\end{equation*}
\end{theorem}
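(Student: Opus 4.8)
The plan is a method-of-moments argument in the style of Davenport--Erd\H{o}s, Harper \cite{harper2022notecharactersumsshort} and Basak--Nath--Zaharescu \cite{BasakNathZaharescu}, the real work being a second-moment estimate over $q$ powered by a Selberg sieve. Write $g(q)=(\log q)^{A}$; the hypotheses give $h_q\to\infty$ and $h_q=(\log q)^{o(1)}$, so in particular $h_q=o(g(q))$. Since a standard normal is determined by its moments, it suffices to produce an $\eta$-strong set $\BB$ on which, for every fixed $k\in\N$,
\begin{equation*}
   \frac{1}{g(q)}\sum_{1\le m\le g(q)}\lp(\frac{S_{h_q}(m,q)}{\sqrt{h_q}}\rp)^{k}\ \longrightarrow\ \mu_{k}\quad(q\to\infty),\qquad \mu_{2j}=(2j-1)!!,\ \ \mu_{2j+1}=0 .
\end{equation*}
I would fix $k$ and $\epsilon>0$, show that the primes failing this estimate by more than $\epsilon$ meet each window $[X,X+X^{\eta}]$ in only $\ll_{k,\epsilon}X^{\eta}(\log X)^{-1-\delta}$ points, and then assemble the resulting sets into one $\eta$-strong $\BB$ by the usual diagonalisation (sending $k\to\infty$ and $\epsilon\to 0$ slowly with $q$).

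Expand $S_{h_q}(m,q)^{k}=\sum_{\mathbf a\in[h_q]^{k}}\chi_q\big(F_{\mathbf a}(m)\big)$ with $F_{\mathbf a}(Y):=\prod_{i=1}^{k}(Y+a_i)$, and split off the tuples for which $F_{\mathbf a}$ is a perfect square in $\Z[Y]$, i.e.\ those whose multiset of entries has every value of even multiplicity. For such $\mathbf a$ we have $F_{\mathbf a}=G_{\mathbf a}^{2}$ with $\deg G_{\mathbf a}=k/2$; since each factor $m+a_i$ lies in $[2g(q)]\subseteq[q-1]$ it is a unit mod $q$, so $\chi_q(F_{\mathbf a}(m))=1$ for every $m\le g(q)$, and the diagonal contributes \emph{exactly} $\#\{\mathbf a\in[h_q]^{k}:F_{\mathbf a}=\square\}=\mu_{k}h_q^{k/2}+O_k(h_q^{k/2-1})$, which is the Gaussian moment after normalising --- and this is valid for every large prime $q$. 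The remainder is $E_k(q):=\sum_{\mathbf a:\,F_{\mathbf a}\neq\square}\tfrac1{g(q)}\sum_{m\le g(q)}\chi_q(F_{\mathbf a}(m))$, and the point is to show $E_k(q)=o(h_q^{k/2})$ off a thin set. By Chebyshev this follows from
\begin{equation*}
   \sum_{q\in[X,X+X^{\eta}]\cap\PP}\lp(\frac{E_k(q)}{h_q^{k/2}}\rp)^{2}\ \ll_{k,\delta}\ \frac{X^{\eta}}{(\log X)^{1+\delta}}\qquad\text{for a suitable }\delta>0 .
\end{equation*}
Morally, for a typical prime $q$ the function $n\mapsto\chi_q(n)$ mimics the extended Rademacher random multiplicative function $f$, for which $\E[f(N)]=\mathbf{1}[N=\square]$; the sieve is what legitimises the average over $q$.

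Opening the square and using complete multiplicativity,
\begin{equation*}
   \sum_{q\in[X,X+X^{\eta}]\cap\PP}E_k(q)^{2}=\sum_{\mathbf a,\mathbf b}\frac{1}{g(q)^{2}}\sum_{m_1,m_2\le g(q)}\ \sum_{q\in[X,X+X^{\eta}]\cap\PP}\lp(\frac{N}{q}\rp),\qquad N:=F_{\mathbf a}(m_1)F_{\mathbf b}(m_2),
\end{equation*}
with $1\le|N|\ll(\log X)^{O_k(1)}$. Here I would use the Selberg sieve as in Harper's note \cite{harper2022notecharactersumsshort}: for $q\in[X,X+X^\eta]$ one has $\mathbf{1}[q\in\PP]\le\mathbf{1}[q\le z]+W(q)$ with $W(q)=\big(\sum_{d\mid q,\ d\le z}\lambda_d\big)^{2}$ a Selberg weight of level $z$ (the $\le z$ primes being handled trivially, since $E_k(q)\ll h_q^{k}$), so the $q$-sum above is replaced by $\sum_{d_1,d_2\le z}\lambda_{d_1}\lambda_{d_2}\sum_{q\in[X,X+X^{\eta}],\ D\mid q}\lp(\tfrac{N}{q}\rp)$ with $D=[d_1,d_2]\le z^{2}$. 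When $N$ is a perfect square $\lp(\tfrac Nq\rp)\in\{0,1\}$, so this tuple contributes at most $\sum_{q\in[X,X+X^{\eta}]}W(q)\ll X^{\eta}/\log z$ by the Selberg upper bound, and summed over the square tuples this is $\ll\#\{N=\square\}\cdot g(q)^{-2}\cdot X^{\eta}/\log z$. When $N$ is not a perfect square, pull $D$ out by Jacobi multiplicativity: the residual sum is a short sum of a non-principal real character of conductor $\ll(\log X)^{O_k(1)}$, hence $\ll(\log X)^{O_k(1)}$ by P\'olya--Vinogradov (compare \eqref{polya_vinogradov}; this is the use of the Weil bound on general character sums), so the non-square tuples contribute $\ll h_q^{2k}\,z^{2}\,(\log X)^{O_k(1)}$. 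Choosing $z=X^{\eta/2}(\log X)^{-D_k}$ makes the latter negligible while keeping $\log z\asymp\log X$ (the hypothesis $\eta>\tfrac12$ gives room here and in the short-interval sieve). Finally one must count the square tuples: the trivial ones, with $m_1=m_2$ and $\mathbf b$ a rearrangement of $\mathbf a$, number $\ll_k g(q)h_q^{k}$, while for the rest one needs $F_{\mathbf a}(m_1)$ and $F_{\mathbf b}(m_2)$ to share a squarefree part; I would bound these using the uniform estimate of Evertse and Silverman for integral points of the hyperelliptic curves $s\,y^{2}=F_{\mathbf a}(x)$ (with elementary Pell/conic estimates in the degenerate low-genus cases) to see they contribute only $\ll g(q)(\log X)^{o(1)}$. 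Since $g(q)=(\log q)^{A}$ with $A>1$ and $h_q=(\log q)^{o(1)}$, this gives $\#\{N=\square\}\cdot g(q)^{-2}X^{\eta}/\log z\ll h_q^{k}X^{\eta}(\log X)^{-A-1+o(1)}$, which is $\ll h_q^{k}X^{\eta}(\log X)^{-1-\delta}$ for some $\delta>0$; dividing by $h_q^{k}$ gives the required bound.

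The crux I expect is this last point --- the interplay of the square tuples, the Evertse--Silverman input, and the choice of $z$. One needs the integral-point bound to be genuinely \emph{uniform} in the coefficients of $F_{\mathbf a}$ and in the auxiliary squarefree modulus $s$ (Siegel's theorem alone gives only ineffective finiteness), and one needs $z$ to be at once large enough that $\log z\asymp\log X$ --- so that the square tuples, which carry a loss $1/g(q)=(\log q)^{-A}$, are controlled, and it is precisely this that makes $A>1$ the relevant threshold --- and small enough, $z^{2}\ll X^{\eta}$, that the P\'olya--Vinogradov completion beats the trivial bound on each progression. Replacing the exponent $2$ in Chebyshev by a slowly growing even moment $2\ell(q)\to\infty$ would relax the hypothesis on $g$ to mere divergence slower than any power of $q$, which is the improvement effected in the rest of the paper; for the theorem as stated, the second moment with $A>1$ is enough.
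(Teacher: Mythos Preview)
Your outline is essentially the paper's own argument: method of moments, a Selberg upper-bound sieve to replace the prime average by a weighted integer average (this is exactly Number~Theory~Fact~\ref{nt_fact}), P\'olya--Vinogradov to kill the non-square $N$, Evertse--Silverman (Corollary~\ref{evertse_corollary}) together with the elementary conic/divisor case to control the square $N$, and Chebyshev to extract the exceptional set. The organisational difference is only that the paper packages the sieve step as a general inequality passing to an expectation over the extended Rademacher model, and then centres the moment \emph{before} squaring, whereas you split off the square-polynomial tuples first and bound $\sum_q E_k(q)^2$ directly; the case analysis and the arithmetic are the same.

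One point worth flagging: the closing sentence misidentifies the mechanism of the paper's improvement. The gain over \cite{BasakNathZaharescu} is \emph{not} obtained by replacing the Chebyshev exponent~$2$ with a growing $2\ell(q)$; the paper keeps the plain second moment throughout (Lemma~\ref{variance_lemma_prob} and Corollary~\ref{Chebyshev}). The saving of $\log Q$ comes precisely from the Selberg weights you already built into your argument. In fact, if you trace your own bound $\#\{N=\square\}\cdot g(q)^{-2}\cdot X^{\eta}/\log z \ll X^{\eta}(\log X)^{-A-1+o(1)}$ you will see it is $\le X^{\eta}(\log X)^{-1-\delta}$ as soon as $A>0$, not only $A>1$; and your sieve choice $z=X^{\eta/2}(\log X)^{-D_k}$ needs only $z^{2}\ll X^{\eta}$, which holds for every fixed $\eta>0$, so the restriction $\eta>1/2$ is not actually used either. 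In other words, the sketch you wrote is already a proof of (the $\eta$-strong case of) Theorem~\ref{New_Theorem} for $g(q)=(\log q)^{A}$ with any $A>0$, which is strictly stronger than Theorem~\ref{BNZ_Theorem}.
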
In our work, we streamline the proof, extend their results, and additionally present an up to date central limit theorem result for all the primes. \newline

\subsection{Statement of results and discussion of improvements}
In the main result of the paper we improve on the above. 
\begin{theorem}\label{New_Theorem}
Let $g$ be an increasing, divergent function such that 
    \begin{itemize}
        \item $g'(c)=O\lp((g(Q))^{0.99}Q^{-0.01}\rp), \forall c \in [Q,Q+Q^{0.01}]$ as $Q \ra \infty$,
        \item $g(Q) \ll Q^{\epsilon}, \forall \epsilon>0$.
    \end{itemize}For each prime $q,$ choose $h_q$ such that $\log h_q /(\log  g(q))\ra 0$ and $h_q \ra \infty$ as $q \ra \infty$. Then, there exists a set $\BB$ of primes such that $\BB$ is $\eta$-sufficient for all $\eta \in [0.01,1]$ and for any $\lambda \in \R$, 
\begin{equation*}
    \lim_{\substack{q \ra \infty \\ q \in \BB}} \MM_{g, q, h_q}(\lambda) = \frac{1}{\sqrt{2\pi}}\int_{-\infty}^{\lambda}e^{-t^2/2}dt.
\end{equation*}
\end{theorem}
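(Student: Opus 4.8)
The plan is to establish the central limit theorem via the method of moments, matching the strategy of Davenport--Erd\H{o}s and of Basak--Nath--Zaharescu, but replacing their moment computations over a long or polylogarithmic window with a sieve-and-equidistribution argument that works for the much shorter window $[g(q)]$. Fix an even integer $k$ (for even moments) or odd $k$ (for odd moments). The quantity to control is
\begin{equation*}
    \frac{1}{g(q)}\sum_{1 \leq m \leq g(q)} \lp(\frac{S_{h_q}(m,q)}{\sqrt{h_q}}\rp)^{k} = \frac{1}{g(q) h_q^{k/2}}\sum_{1 \leq m \leq g(q)}\ \sum_{\substack{m < n_1,\dots,n_k \leq m+h_q}} \chi_q(n_1\cdots n_k).
\end{equation*}
Opening the $k$-th power and summing over $m$ first, the inner count of $m$ contributing a fixed tuple $(n_1,\dots,n_k)$ is essentially $h_q$ times an indicator that the $n_i$ all lie in a window of length $h_q$; the standard combinatorial bookkeeping (as in the classical Gaussian-moment computation) shows the main term comes from tuples where the $n_i$ pair up so that $n_1\cdots n_k$ is a perfect square, contributing the Gaussian moments $(k-1)!! h_q^{k/2}$ up to lower-order terms, \emph{provided} we can show that the ``off-diagonal'' contribution, where $n_1\cdots n_k = \ell \neq \square$, satisfies $\sum_{m \leq g(q)} \chi_q(\ell(m)) = o(g(q) h_q^{k/2})$ on average over the relevant tuples.

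The key new input is how one handles that off-diagonal error when $g(q)$ is tiny compared to $q$: here the Weil bound on complete sums is useless because $g(q)$ may be as small as $\log\log q$, so one cannot afford to complete the sum. Instead, the plan is to use Selberg's sieve in the form recently used by Harper \cite{harper2022notecharactersumsshort}: one shows that for almost all primes $q$ (in the $\eta$-sufficient sense), the values $\chi_q(n)$ for $n$ in short initial segments behave ``as if independent'', by bounding the number of primes $q$ for which a given short pattern of Legendre symbols fails to be equidistributed. Concretely, for each fixed squarefree kernel $\ell$, one estimates $\sum_{q \leq Q} \lp| \sum_{m \leq g(q)} \chi_q(\ell m \cdots)\rp|^2$ (or a higher even power), expands, and uses orthogonality over $q$ together with the Evertse--Silverman intersection bound to control how often two distinct polynomial-in-$m$ arguments can coincide; this is where the hypothesis $g(Q) \ll Q^{\epsilon}$ and the derivative condition on $g$ enter, ensuring that $g$ does not oscillate too wildly across the sieving window $[Q, Q+Q^{0.01}]$ so that the count of exceptional primes in each dyadic-type block is genuinely $o(Q^{\eta}/\log Q)$. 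Assembling these block-wise exceptional sets over all $k$ and all relevant kernels $\ell$ (a countable union, handled by a diagonal argument since only finitely many moments and finitely many $\ell$ matter at each scale) produces the single $\eta$-sufficient bad set $\BB$ outside of which every moment converges to the Gaussian one.

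Once all moments are shown to converge to those of $\NN(0,1)$ along $q \in \BB$, the conclusion $\lim \MM_{g,q,h_q}(\lambda) = \frac{1}{\sqrt{2\pi}}\int_{-\infty}^{\lambda} e^{-t^2/2}\,dt$ follows from the classical method of moments (the standard normal being determined by its moments), exactly as in \cite{BasakNathZaharescu}. The main obstacle I expect is the second step: controlling the exceptional set uniformly across the short sieving interval $[Q,Q+Q^{0.01}]$ while simultaneously allowing $h_q$ to grow (so the number of variables $n_i$ ranges over a window whose length itself depends on $q$) and keeping the count of bad primes below $o(Q^{\eta}/\log Q)$ rather than merely $o(Q^{\eta})$ --- this is precisely the quantitative improvement over the $(\log q)^A$ window, and it is where the Selberg sieve bound, the Evertse--Silverman intersection estimate, and the hypotheses on $g$ must be combined carefully. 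The condition $\log h_q/\log g(q) \to 0$ is exactly what guarantees the diagonal main term dominates, since the number of non-square tuples that could survive is controlled by a power of $h_q$ that is negligible against $g(q)$.
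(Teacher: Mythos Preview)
Your proposal follows the same overall strategy as the paper---method of moments, Selberg's sieve following Harper over the short window $[Q,Q+Q^{0.01}]$, Evertse--Silverman, a Chebyshev step to extract the exceptional primes, and a diagonal construction of $\BB$---so it is on the right track. A few details are misattributed, however, and it is worth seeing how the paper actually organises the argument.

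The paper does not work kernel-by-kernel. The sieve inequality (Number Theory Fact~\ref{nt_fact}) is applied \emph{once}, with coefficients $a_n$ encoding the entire centred $2r$-th moment $\tfrac{1}{g(Q)}\sum_{m\leq g(Q)}S_h^{2r}(m,q)-\mu_{2r}(h-\theta r)^r$; this converts the average over primes of its square directly into an expectation over the extended Rademacher random multiplicative function $f$ (plus a negligible error). The combinatorics is then carried out on that expectation, where by orthogonality only perfect-square products $\prod_i(m_1+\alpha_i)(m_2+\beta_i)$ survive. Evertse--Silverman is invoked to bound, for a fixed tuple of distinct shifts $\gamma_i$, the number of $m$ with $\prod_i(m+\gamma_i)$ a perfect square---that is, integer points on $Y^2=f(X)$---not to ``control how often two distinct polynomial-in-$m$ arguments coincide''. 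This single variance estimate (Lemma~\ref{variance_lemma_prob}) feeds a single Chebyshev inequality (Corollary~\ref{Chebyshev}); there is no union over kernels $\ell$, which sidesteps the bookkeeping you anticipate.

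Finally, the derivative hypothesis on $g$ is not what controls the exceptional-set count; that comes straight from the variance bound. It is used only at the very end of the proof, to transfer the moment asymptotic from the common averaging length $g(Q)$ (on which Lemma~\ref{variance_lemma_prob} is stated) to $g(q)$ for each individual prime $q\in[Q,Q+Q^{0.01}]$, since $\MM_{g,q,h_q}$ is defined with $g(q)$.
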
Especially, we can choose $g(q) = (\log q )^A, 0< A\leq 1$, not permitted by Theorem \ref{BNZ_Theorem}. In this case, one can also recover from our proof that $\BB$ can be taken to be $\eta$-strong, so the above extends Theorem \ref{BNZ_Theorem} to this missing range. We would like to note that the lower bound for the parameter, $\eta = 0.01$ is just chosen for convenience. Indeed, we will see that one could conduct all of our proofs if we let $\eta= a, a\in (0,1]$ be any fixed positive number. After proving Theorem \ref{New_Theorem} one can use it to prove the following Corollary. 

\begin{corollary}\label{New_Corollary}
Let $g$ be an increasing, divergent function such that 
    \begin{itemize}
        \item $g'(c)=O\lp((g(Q))^{0.99}Q^{-0.01}\rp), \forall c \in [Q,Q+Q^{0.01}]$ as $Q \ra \infty$,
        \item $g(Q) \ll Q^{\epsilon}, \forall \epsilon>0$.
    \end{itemize}For each prime $q,$ choose $h_q$ such that $\log h_q /(\log  g(q))\ra 0$ and $h_q \ra \infty$ as $q \ra \infty$. Then, there exists a set $\BB$ of primes such that for any $\lambda \in \R$, 
\begin{equation*}
    \lim_{\substack{q \ra \infty \\ q \in \BB}} \MM_{g, q, h_q}(\lambda) = \frac{1}{\sqrt{2\pi}}\int_{-\infty}^{\lambda}e^{-t^2/2}dt
\end{equation*}and 
\begin{equation*}
    \lim_{X \ra \infty} \frac{\lp|\BB \cap [X, X+X^{\eta}\rp|]}{\lp|\PP \cap [X, X+X^{\eta}]\rp|} = 1, \text{ for all } \eta \in [0.525, 1].
\end{equation*}
\end{corollary}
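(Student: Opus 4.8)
The plan is to obtain the Corollary as a direct consequence of Theorem \ref{New_Theorem} combined with a quantitative lower bound for the count of primes in short intervals; the point is that once Theorem \ref{New_Theorem} is available, the statement reduces to a bookkeeping exercise that is automatically uniform in $g$. First I would apply Theorem \ref{New_Theorem} verbatim to the given $g$ and the given family $(h_q)_{q\in\PP}$: this produces a \emph{single} set $\BB\subseteq\PP$, depending only on $g$ and $(h_q)_q$, which is $\eta$-sufficient for every $\eta\in[0.01,1]$ and for which $\MM_{g,q,h_q}(\lambda)$ tends to $\tfrac{1}{\sqrt{2\pi}}\int_{-\infty}^{\lambda}e^{-t^2/2}\,dt$ along $q\in\BB$ for every $\lambda\in\R$. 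With this $\BB$, the first displayed limit in the Corollary is precisely the conclusion of Theorem \ref{New_Theorem}; since the remaining density assertion makes no reference to $g$ at all, no further choice is needed and the whole argument is valid for \emph{every} $g$ satisfying the two bullet hypotheses.

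Next I would reduce the density statement to a lower bound for $\pi(X+X^{\eta})-\pi(X)$. Fix $\eta\in[0.525,1]$. By $\eta$-sufficiency of $\BB$, i.e.\ \eqref{eta_strong}, we have $|(\PP\setminus\BB)\cap[X,X+X^{\eta}]|=o(X^{\eta}/\log X)$. If one also knows that $|\PP\cap[X,X+X^{\eta}]|\gg X^{\eta}/\log X$ for all sufficiently large $X$, then
\begin{equation*}
    \frac{|\PP\cap[X,X+X^{\eta}]|-|\BB\cap[X,X+X^{\eta}]|}{|\PP\cap[X,X+X^{\eta}]|}=\frac{|(\PP\setminus\BB)\cap[X,X+X^{\eta}]|}{|\PP\cap[X,X+X^{\eta}]|}=o(1),
\end{equation*}
which rearranges to the claimed limit $|\BB\cap[X,X+X^{\eta}]|/|\PP\cap[X,X+X^{\eta}]|\to1$; note that no upper bound on the denominator is required.

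It remains to supply the lower bound $|\PP\cap[X,X+X^{\eta}]|\gg X^{\eta}/\log X$, and this is exactly where the exponent $0.525$ enters: by the Baker--Harman--Pintz theorem on primes in short intervals one has $\pi(Y+Y^{0.525})-\pi(Y)\gg Y^{0.525}/\log Y$ for all sufficiently large $Y$. To promote the exponent $0.525$ to an arbitrary $\eta\in[0.525,1]$ I would partition $[X,X+X^{\eta}]$ into $\asymp X^{\eta-0.525}$ consecutive blocks of length $\asymp X^{0.525}$; each block has the form $[Y,Y+Y_0]$ with $X\le Y\le 2X$ (so $Y^{0.525}\asymp X^{0.525}$) and $Y_0\gg Y^{0.525}$, hence contributes $\gg X^{0.525}/\log X$ primes, and summing over the $\asymp X^{\eta-0.525}$ blocks gives $\gg X^{\eta}/\log X$ primes, as needed. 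I expect the only (mild) obstacle to be keeping the implied constants uniform over the blocks, which follows at once from $Y\asymp X$. For $\eta<0.525$ no lower bound of order $X^{\eta}/\log X$ for $\pi(X+X^{\eta})-\pi(X)$ is currently known, which is the sole reason the density conclusion is stated only for $\eta\in[0.525,1]$ even though $\BB$ is $\eta$-sufficient all the way down to $\eta=0.01$; beyond this there is no substantive difficulty past Theorem \ref{New_Theorem} itself.
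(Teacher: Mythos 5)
Your proposal is correct and follows essentially the same route as the paper, which proves the Corollary by taking the set $\BB$ from Theorem \ref{New_Theorem} and simply invoking Number Theory Fact \ref{baker_harman_pintz} to lower-bound $|\PP\cap[X,X+X^{\eta}]|$ against the $o(X^{\eta}/\log X)$ bound on the exceptional primes. Your extra block-decomposition step to promote the exponent $0.525$ to general $\eta\in[0.525,1]$ is harmless but unnecessary, since the Baker--Harman--Pintz statement as quoted in the paper already covers the whole range $\eta\in[0.525,1]$.
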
Similarly as with Theorem \ref{New_Theorem}, this recovers and extends on \cite[Corollary ~1.2]{BasakNathZaharescu}. The improvement to the method of \cite{BasakNathZaharescu} in our paper comes from an observation, that one can pass from an average of sums of Legendre symbols over primes in intervals of length $Q^{\eta}$, to expectation of these sums with Legendre symbol replaced with extended Rademacher random multiplicative functions. This fact, established by Harper in \cite[Number Theory Result~3]{harper2022notecharactersumsshort}  (for $\eta=1$), can be proved using appropriate Selberg's sieve weights. This allows us to save a factor of $(\log Q)$ lost in argument of \cite{BasakNathZaharescu}, clean up the analysis (as $f(n)$ are perfectly orthogonal, so nicer to work with than Legendre symbols), and allows much shorter ranges for the averages over primes ($\eta\leq0.5$). 
In the last part of the paper we gather some technical tools to extend the distributional results of \cite{Davenport2022TheDO}, to allow for the starting point to be chosen uniformly at random from $[g(q)],$ where $g(q)$ is any function satisfying $Q^{1/2}\log Q = o(g(Q)^{1-\epsilon})$ for any $\epsilon>0.$

\begin{theorem}\label{main_theorem_all_primes}
Let $g(Q)$ be any increasing function satisfying $g(Q)^{1-\epsilon} \gg Q^{1/2}\log Q$ for any $\epsilon>0$ and $q(Q) \leq Q, \forall Q$. For any prime $q$ let $h_q \ra \infty$ and $\frac{\log h_q}{\log (g(q))} \ra 0$ as $q \ra \infty$. Then with $\MM_{g, q,h_q}(\lambda)$ defined as before, we have 
    \begin{equation}
        \MM_{g,q,h_q}(\lambda) \ra \frac{1}{\sqrt{2\pi}}\int_{-\infty}^{\lambda}e^{-t^2/2}dt, \text{ as }q \ra \infty,
    \end{equation} for any fixed $\lambda$. 
\end{theorem}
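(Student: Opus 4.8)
The plan is to run the method of moments: since the standard normal is determined by its moments, it suffices to prove that for every fixed integer $k\geq 1$ the $k$-th moment
\begin{equation*}
    M_k(q)\;:=\;\frac{1}{g(q)}\sum_{m=1}^{\lfloor g(q)\rfloor}\lp(\frac{S_{h_q}(m,q)}{\sqrt{h_q}}\rp)^{k}\;=\;\frac{1}{g(q)\,h_q^{k/2}}\sum_{m=1}^{\lfloor g(q)\rfloor}\ \sum_{a_1,\dots,a_k=1}^{h_q}\chi_q\lp((m+a_1)\cdots(m+a_k)\rp)
\end{equation*}
satisfies $M_{2r}(q)\ra (2r-1)!!$ and $M_{2r+1}(q)\ra 0$ as $q\ra\infty$, and then to invoke the fact that moment convergence to the moments of $\NN(0,1)$ forces pointwise convergence of the distribution functions $\MM_{g,q,h_q}(\lambda)$. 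First observe that throughout one may assume $q\nmid (m+a_i)$ for all $i$: since $\tfrac{\log h_q}{\log g(q)}\ra 0$ and $g(q)\leq q$ we have $h_q=q^{o(1)}$ and $g(q)\gg q^{1/2}$, so a multiple of $q$ lies in $(m,m+h_q]$ for at most $h_q$ values of $m\leq g(q)$, and bounding $S_{h_q}$ trivially on that exceptional set costs only $\ll h_q^{1+k/2}/g(q)=o(1)$ in $M_k(q)$. I then classify the tuples $\vec a=(a_1,\dots,a_k)\in[h_q]^k$ according to whether the underlying multiset $\{a_1,\dots,a_k\}$ is \emph{doubled} (every value occurs with even multiplicity) or not.

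\noindent\textbf{Main term.} If $\vec a$ is doubled then $k=2r$, the product $(m+a_1)\cdots(m+a_k)$ is a perfect square, and $\chi_q$ of it equals $1$. A standard count shows the number of doubled $\vec a\in[h_q]^{2r}$ is $(2r-1)!!\,h_q^{r}+O_r(h_q^{r-1})$: the leading contribution comes from the $(2r-1)!!$ perfect matchings of $\{1,\dots,2r\}$ with each pair given its own value, and tuples with fewer than $r$ distinct values number $O_r(h_q^{r-1})$. Summing over $m$ and dividing by $g(q)h_q^{r}$ gives $(2r-1)!!+O(1/h_q)+o(1)\ra(2r-1)!!$. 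When $k$ is odd no tuple is doubled, so this class is empty; this is the source of $M_{2r+1}(q)\ra 0$, modulo the error terms below.

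\noindent\textbf{Error terms.} Two contributions remain. First, the non-doubled tuples $\vec a$ for which $(m+a_1)\cdots(m+a_k)$ nevertheless happens to be a perfect square for the particular $m$: collecting these over $\vec a$ and over $m$, their total number equals $\sum_{m\leq g(q)}\#\{\vec n\in(m,m+h_q]^k:\ \prod_i n_i=\square,\ \vec n\text{ not pair-matched}\}$, and each inner count is $o(h_q^{k/2})$ uniformly in $m$ by the estimate on square products in short intervals (which in turn rests on the Evertse--Silverman intersection bound, and genuinely uses $h_q\ra\infty$); hence this contributes $o(1)$ to $M_k(q)$. Second, the genuine off-diagonal: for a non-doubled $\vec a$ the polynomial $f_{\vec a}(T):=\prod_i(T+a_i)\in\Z[T]$ has squarefree part of degree $\geq 1$, and since the $a_i$ lie in $[1,h_q]$ with $h_q<q$ the reduction $f_{\vec a}\bmod q$ is still not a constant times a square in $\F_q[T]$; Weil's bound for character sums of polynomials together with the completion argument in the style of P\'olya--Vinogradov then gives $\lp|\sum_{m=1}^{\lfloor g(q)\rfloor}\lp(\tfrac{f_{\vec a}(m)}{q}\rp)\rp|\ll_k \sqrt{q}\,\log q$. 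Summing over the at most $h_q^{k}$ non-doubled $\vec a$, the off-diagonal contribution to $M_k(q)$ is
\begin{equation*}
    \ll_k\ \frac{h_q^{k}\,\sqrt{q}\,\log q}{g(q)\,h_q^{k/2}}\;=\;\frac{h_q^{k/2}\sqrt{q}\,\log q}{g(q)}.
\end{equation*}

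\noindent\textbf{Conclusion and main obstacle.} By hypothesis $g(Q)^{1-\epsilon}\gg Q^{1/2}\log Q$ for every $\epsilon>0$, so $g(q)\gg_\delta(\sqrt{q}\,\log q)^{1+\delta}$ for some $\delta>0$; combined with $h_q=q^{o(1)}$ (from $\tfrac{\log h_q}{\log q}\leq \tfrac{\log h_q}{\log g(q)}\ra 0$) this yields
\begin{equation*}
    \frac{h_q^{k/2}\sqrt{q}\,\log q}{g(q)}\;\ll_\delta\;\frac{h_q^{k/2}}{(\sqrt{q}\,\log q)^{\delta}}\;\leq\;\frac{q^{o(1)}}{q^{\delta/2}}\;\ra\;0.
\end{equation*}
Hence $M_{2r}(q)\ra(2r-1)!!$ and $M_{2r+1}(q)\ra 0$, and the method of moments finishes the proof. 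The delicate point is precisely this last off-diagonal estimate: after Weil and completion one is left with an error of order $h_q^{k/2}\sqrt{q}\,\log q/g(q)$, and forcing it to vanish is exactly what the condition ``$g$ outgrows $\sqrt{q}\,\log q$ by a power'' encodes; since one cannot currently do better than the square-root cancellation of the Weil bound, this is why the all-primes statement cannot be pushed below the range $g(q)\approx \sqrt{q}$.
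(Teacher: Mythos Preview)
Your proof is correct and follows the same method-of-moments route as the paper: classify tuples by whether the multiset $\{a_1,\dots,a_k\}$ is doubled, extract the Gaussian moment from the doubled ones, and bound the non-doubled contribution via the incomplete Weil/completion estimate (the paper cites Mauduit--S\'ark\"ozy for this, giving $\ll k\sqrt{q}\log q$ per non-doubled tuple).

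One remark: your ``first error term''---non-doubled $\vec a$ for which $\prod_i(m+a_i)$ accidentally happens to be a perfect square integer at some particular $m$---is superfluous. The Weil bound in your second step already controls the \emph{entire} sum $\sum_{m\leq g(q)}\chi_q(f_{\vec a}(m))$ for each non-doubled $\vec a$, and that sum includes whatever accidental square values arise; the paper's proof of this theorem accordingly makes no use of Evertse--Silverman at all. Moreover, the justification you sketch for that extra step is not quite what Evertse--Silverman delivers: that result fixes the polynomial $f_{\vec a}$ and bounds the number of $m$ with $f_{\vec a}(m)$ a square, whereas you claim a uniform-in-$m$ bound on the number of $\vec a$. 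To invoke it you would have to swap the order of summation first---but since the step is unnecessary, this does not affect the correctness of your argument.
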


\subsection{Discussion on further improvements}
Our paper establishes the central limit theorem for character sums with starting point chosen uniformly at random from $[g(q)]$, for all the primes with $q^{1/2+\epsilon} \ll g(q) \ll q$ and for almost all the primes for practically arbitrary divergent $g(q)\ll q^{\epsilon}$, for all $\epsilon>0$. We believe that our method would give similar kind of improvements, and at least a $\log q$ factor saving in another work of Basak, Nath and Zaharescu \cite{BasakNathZaharescu2}. When it comes to results on all the primes, it remains an open problem to show that a central limit theorem holds with $g(q) = O(q^{1/2-c})$ for some fixed $c>0$. 

\section{Ingredients}
In this section we mention two results, which are necessary for us to establish Theorem \ref{New_Theorem} and Corollary \ref{New_Corollary}. The first ingredient of our proof is based on the following result of Evertse and Silverman \cite{Evertse_Silverman_1986}: 
\begin{fact}{(Evertse-Silverman, 1986)}\\
Set the following notation:
\begin{itemize}
    \item $K$ is an algebraic number field of degree $m$.
    \item $S$ is a finite set of places of $K$, containing the infinite places.
    \item $s = \#S$.
    \item $R_S$ is the ring of $S$- integers of $K$.
    \item $f(X) \in R_S[X]$ is a polynomial of degree $d$ with discriminant $\text{disc}(f) \in R^{*}_S.$
    \item $L/K$ is an extension of degree $M$. 
    \item $\kappa_n(L)$ is the $n$-rank of the ideal class group of $L$.
\end{itemize}For $n \geq 2$, let 
\begin{equation*}
    V(R_S,f,n) = \{x \in R_S: f(x)\in K^{*n}\},
\end{equation*}where $K^{*n}$ denotes the elements of $K$ which are perfect $n$'th powers. 
\begin{enumerate}[label = (\alph*)]
    \item Let $n \geq 3, d \geq 2$ and assume that $L$ contains at least two zeros of $f$. Then 
    \begin{equation*}
        \#V(R_S,f,n) \leq 17^{M(6m+s)}n^{2Ms+\kappa_n(L)}.
    \end{equation*}
    \item Let $d \geq 3,$ and assume that $L$ contains at least three zeros of $f$. Then, 
    \begin{equation*}
        \#V(R_S,f,n) \leq 7^{M(4m+9s)}4^{\kappa_2(L)}.
    \end{equation*}
\end{enumerate}
\end{fact}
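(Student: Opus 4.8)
This is the Evertse--Silverman bound for $S$-integral points on the superelliptic equation $y^n = f(x)$, so the proof I would present is essentially a reconstruction of their argument, whose engine is the quantitative theory of $S$-unit and Thue(--Mahler) equations; I sketch the skeleton and indicate where the bookkeeping is delicate. Start from $x \in V(R_S,f,n)$, so $f(x) = a\prod_{i=1}^{d}(x-\alpha_i) = y^n$ with $y \in K^*$ and the $\alpha_i$ in the splitting field; after replacing $f$ by a monic model (which alters $V(R_S,f,n)$ in a controlled way) pass to the ring $R_T$ of $T$-integers of $L$, where $T$ is the set of places of $L$ above $S$, so $\#T \leq Ms$. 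The hypothesis $\mathrm{disc}(f)\in R_S^*$ is exactly what forces the principal ideals $(x-\alpha_i)$ and $(x-\alpha_j)$ of $R_T$ to be pairwise coprime for distinct $L$-rational roots, and since their product equals the $n$-th power ideal $(y)^n$ up to a $T$-unit, each of them is itself the $n$-th power of an integral ideal: $(x-\alpha_i) = \mathfrak{b}_i^{\,n}$.

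Next I would remove the obstruction coming from the $\mathfrak{b}_i$ (or, for the ratios used below, the $\mathfrak{b}_i\mathfrak{b}_j^{-1}$) being non-principal: their classes lie in the $n$-torsion of the $T$-class group of $L$, whose $n$-rank is at most $\kappa_n(L)$, so the class can be absorbed into a representative taken from a fixed set of size $\leq n^{\kappa_n(L)}$ of ideals of bounded norm, and likewise the remaining ambiguity by $R_T^*$ modulo $n$-th powers contributes a factor of size $\leq c(m)\,n^{\#T}\leq c(m)\,n^{Ms}$ (rank plus roots of unity). In this way one writes $x-\alpha_i = \xi_i\,\beta_i^{\,n}$ (and, more usefully, $\tfrac{x-\alpha_i}{x-\alpha_j} = \xi\,\beta^{\,n}$, noting that this ratio already determines $x$ through a linear fractional formula) with $\xi_i,\xi$ ranging over a fixed finite set $\Xi$ of size $\#\Xi \leq c(m)\,n^{Ms+\kappa_n(L)}$.

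For part (a) I would take two $L$-rational roots $\alpha_i,\alpha_j$, insert the representations $x-\alpha_i=\xi_i\beta_i^n$, $x-\alpha_j=\xi_j\beta_j^n$ into $(x-\alpha_i)-(x-\alpha_j)=\alpha_j-\alpha_i$, and read off a Thue--Mahler equation $\xi_i\beta_i^{\,n}-\xi_j\beta_j^{\,n}=\alpha_j-\alpha_i$ (right side an $S$-unit) — equivalently, after dividing, a two-term $S$-unit equation with both terms in fixed cosets of $n$-th powers in a boundedly generated $S$-unit group. A uniform bound for the number of solutions of a degree-$n$ Thue--Mahler equation (or of an $S$-unit equation) over a number field, of shape $c(m)\,n^{O(\#T)}$ in the Bombieri--Schmidt/Evertse circle of ideas, then caps the count for each choice of coset data; summing over $\Xi$, over the finitely many admissible pairs of roots, and accounting for the finite-to-one passage from $x$ to the relevant quantity yields, after elementary estimation, $\#V(R_S,f,n)\leq 17^{M(6m+s)}n^{2Ms+\kappa_n(L)}$. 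For part (b), with three $L$-rational roots $\alpha_i,\alpha_j,\alpha_k$ available, I would instead use the identity $(x-\alpha_i)(\alpha_j-\alpha_k)+(x-\alpha_j)(\alpha_k-\alpha_i)+(x-\alpha_k)(\alpha_i-\alpha_j)=0$; dividing by $(x-\alpha_k)(\alpha_i-\alpha_j)$ converts it into $A+B=1$ with $(A),(B)$ both $n$-th powers of fractional ideals, and eliminating the ambient variable from the $n$-th-power representations turns this into a Thue equation $\xi_1W_1^{\,n}+\xi_2W_2^{\,n}=1$ of degree $n\geq 3$. Feeding this into a bound for Thue equations that is \emph{uniform in the degree} collapses the dependence on $n$ altogether and leaves only the $2$-part of the class-group obstruction, producing $\#V(R_S,f,n)\leq 7^{M(4m+9s)}4^{\kappa_2(L)}$.

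The main obstacle is precisely this last ingredient: an estimate of the form $\#\{(U,V)\in R_T^2:\ aU^n+bV^n=c\}=O_{m,\#T}(1)$ that is independent of $n$ and of $a,b,c$. This is a substantial theorem in its own right — proved via the Thue--Siegel principle together with a geometry-of-numbers gap principle separating small solutions from large ones — and extracting from it exactly the exponents $4m+9s$ and $4\kappa_2(L)$ requires careful control of the number of archimedean and non-archimedean approximation domains. The secondary difficulty, in part (a), is to keep the dependence on $n$ no larger than $n^{2Ms+\kappa_n(L)}$: each point at which one introduces $n$-th-power cosets of units or $n$-torsion ideal classes must be made to contribute only once, which is why one should decompose the single quantity $\tfrac{x-\alpha_i}{x-\alpha_j}$ (already determining $x$) rather than the pair $(x-\alpha_i,\,x-\alpha_j)$, and why the Thue--Mahler solution count must be taken in its sharpest available form.
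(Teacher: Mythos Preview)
The paper does not prove this statement. It is quoted verbatim as a ``Number Theory Fact'' from Evertse--Silverman \cite{Evertse_Silverman_1986} in the Ingredients section, used only as a black box to deduce Corollary~\ref{evertse_corollary} (the bound $7^{13+9\omega(D(f))}$ on integral solutions of $Y^2=f(X)$), and no argument for it is supplied. So there is nothing in the paper to compare your proposal against; what you have written is a sketch of the original 1986 proof, not of anything the present paper does.

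That said, your outline of the Evertse--Silverman argument is broadly accurate for part~(a): the discriminant hypothesis forces coprimality of the $(x-\alpha_i)$, each becomes an $n$-th ideal power, one absorbs the $n$-torsion class-group and unit-group ambiguities into a set of size $\ll n^{Ms+\kappa_n(L)}$, and subtracting two such representations yields a Thue--Mahler equation. For part~(b) your description is slightly off and misses the case that actually matters here. You reduce to a ``Thue equation $\xi_1W_1^{\,n}+\xi_2W_2^{\,n}=1$ of degree $n\geq 3$'', but part~(b) carries no lower bound on $n$; in particular it must cover $n=2$, which is precisely the case invoked in Corollary~\ref{evertse_corollary}. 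The correct engine for~(b) is Evertse's uniform bound on the number of solutions of the $S$-unit equation $u+v=1$ over $L$ (which is what the Siegel identity produces after clearing the bounded coset data), not a degree-$n$ Thue bound; this is why the estimate in~(b) is independent of $n$. Your explanation of why only $\kappa_2(L)$, rather than $\kappa_n(L)$, survives in~(b) is also not quite right, but since the paper neither proves nor needs this level of detail, the point is moot for present purposes.
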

As noted by the authors of \cite{BasakNathZaharescu}, this implies the following Corollary, which allows us to put uniform bounds on the number of solutions to equations that we will be working with further in the paper.
\begin{corollary}\label{evertse_corollary}
    Let 
    \begin{equation*}
        f(X) = X(X+\alpha_1)...(X+\alpha_k),
    \end{equation*}where $0 < \alpha_1<...<\alpha_k$ are integers and $k \geq 3$. Then the number of integers solutions to the equation $Y^2 = f(X)$ is $\leq 7^{13+9\omega(D(f))},$ where $\omega(D(f))$ denotes the number of distinct primes dividing the discriminant $D(f)$. 
\end{corollary}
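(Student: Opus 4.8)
The plan is to obtain Corollary~\ref{evertse_corollary} as a direct specialization of part~(b) of the Evertse--Silverman Fact above, taken with $K=L=\Q$ and $n=2$. First I would fix $S$ to be the set of places of $\Q$ consisting of the archimedean place together with the (finitely many) rational primes dividing $D(f)$, so that $s=\#S=1+\omega(D(f))$ and $R_S=\Z[1/D(f)]$. Since $f$ is monic with integer coefficients we have $f\in R_S[X]$, and because the roots of $f$ are $0,-\alpha_1,\dots,-\alpha_k$ its discriminant is
\begin{equation*}
D(f)=\lp(\prod_{1\le i\le k}\alpha_i\prod_{1\le i<j\le k}(\alpha_j-\alpha_i)\rp)^{2},
\end{equation*}
a positive integer all of whose prime divisors lie in $S$; hence $\text{disc}(f)=D(f)\in R_S^{*}$, so the standing hypotheses of the Fact on $R_S$ and $f$ are met.

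Next I would check the remaining hypotheses of part~(b): here $d=\deg f=k+1\ge 4\ge 3$, and the $k+1$ roots $0,-\alpha_1,\dots,-\alpha_k$ are pairwise distinct and lie in $L=\Q$, so $L$ contains at least three (indeed all) zeros of $f$. Part~(b) then gives
\begin{equation*}
\#V(R_S,f,2)\le 7^{M(4m+9s)}\,4^{\kappa_2(L)} .
\end{equation*}
Substituting $m=[\Q:\Q]=1$, $M=[L:K]=1$, $s=1+\omega(D(f))$ and $\kappa_2(\Q)=0$ (the ideal class group of $\Q$ is trivial), the right-hand side collapses to $7^{4+9(1+\omega(D(f)))}=7^{13+9\omega(D(f))}$.

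Finally I would translate this into a statement about $Y^2=f(X)$. If $(X,Y)\in\Z^2$ satisfies $Y^2=f(X)$ with $Y\neq 0$, then $f(X)$ is a nonzero perfect square, hence $f(X)\in\Q^{*2}$; since $\Z\subseteq R_S$ this forces $X\in V(R_S,f,2)$, and for each such $X$ there are at most two admissible $Y$. The only other solutions have $Y=0$, i.e.\ $X$ is one of the $k+1$ roots of $f$, listed explicitly. Hence the number of $X$ with $f(X)$ a perfect square is at most $\#V(R_S,f,2)\le 7^{13+9\omega(D(f))}$, which is the asserted bound.

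I do not anticipate a genuine obstacle: all the arithmetic depth is carried by the Evertse--Silverman theorem, which is used as a black box. The two points one must get right are the choice of $S$ — it has to be exactly the archimedean place plus the primes dividing $D(f)$, so that $s=1+\omega(D(f))$ and no spurious prime inflates the exponent — and the choice $L=\Q$, which is legitimate precisely because every root of $f$ is rational and which is what keeps $M=1$ and, crucially, $4^{\kappa_2(L)}=1$ rather than an uncontrolled factor.
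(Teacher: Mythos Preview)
Your proposal is correct and matches the intended deduction: the paper does not give a detailed proof of this corollary but simply records it as the specialization of the Evertse--Silverman theorem noted in \cite{BasakNathZaharescu}, and your choice $K=L=\Q$, $S=\{\infty\}\cup\{p:p\mid D(f)\}$ with $m=M=1$, $s=1+\omega(D(f))$, $\kappa_2(\Q)=0$ is exactly that specialization, yielding $7^{4+9(1+\omega(D(f)))}=7^{13+9\omega(D(f))}$. One remark: strictly speaking $V(R_S,f,2)$ counts only those $X$ with $f(X)$ a \emph{nonzero} square, so the $k+1$ roots of $f$ give additional solutions $(X,0)$; you correctly flag this, and in the paper's application (where one counts positive $m$ making a product a perfect square) these boundary solutions are harmless, so the slight looseness in the stated bound is immaterial.
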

The second ingredient, needed for the proof of the second part of Corollary \ref{New_Corollary} is the strongest currently known statement establishing the order of primes in short intervals due to Baker, Harman and Pintz \cite{Baker_Harman_Pintz}.

\begin{fact}{(Baker-Harman-Pintz, 2001)}\label{baker_harman_pintz}\\
Let $Q$ be sufficiently large. Then for all $\eta \in [0.525,1]$ we have 
\begin{equation}
    \pi(Q+Q^{\eta}) - \pi(Q) \gg \frac{Q^{\eta}}{\log Q}.
\end{equation}
\end{fact}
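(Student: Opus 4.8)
The plan is to prove this by Harman's sieve method, following the now-standard route to primes in short intervals of which the exponent $0.525$ is the current optimum. Write $x=Q$ and $y=Q^{\eta}$, and set $\AA=\{n:x<n\leq x+y\}$; counting primes in $(Q,Q+Q^{\eta}]$ then amounts to estimating the sifting function $S(\AA,(x+y)^{1/2})$, the number of $n\in\AA$ with no prime factor below $(x+y)^{1/2}$. The comparison ("model") sequence is the full interval, for which $\#\{n\in\AA:d\mid n\}=y/d+O(1)$; the real content is that weighted and bilinear versions of this asymptotic persist on average.

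\emph{Step 1 (arithmetic information).} I would first establish two families of asymptotic formulas for sums over $\AA$. A Type~I estimate: for coefficients $|a_m|\leq 1$ and $M\leq x^{\theta_1}$,
\[
\sum_{m\leq M}a_m\Big(\#\{n\in\AA:m\mid n\}-\tfrac{y}{m}\Big)=o\Big(\tfrac{y}{\log x}\Big),
\]
and a Type~II estimate: for $x^{\alpha}\leq N\leq x^{\beta}$ and $|a_m|,|b_n|\leq 1$,
\[
\sum_{\substack{mn\in\AA\\ m\sim M,\ n\sim N}}a_m b_n=\tfrac{y}{x}\sum_{mn\sim x}a_m b_n+o\Big(\tfrac{y}{\log x}\Big).
\]
Both are proved by passing to Dirichlet polynomials via Perron's formula and bounding the resulting integrals of $\zeta$ and of the generating polynomials $\sum a_m m^{-s}$, $\sum b_n n^{-s}$ on the critical line. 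The inputs are (i) zero-density estimates for $\zeta$ (the Huxley/Heath-Brown/Jutila family, together with the density bounds near $\sigma=1$), (ii) fourth-moment and hybrid mean-value theorems, crucially Watt's mean-value theorem for $|\zeta(\tfrac12+it)|^2|M(\tfrac12+it)|^2$, and (iii) the Halász--Montgomery large-values method to control the measure of $t$ on which the Dirichlet polynomials are large. Optimising the admissible exponents $\theta_1,\alpha,\beta$ as functions of $\eta$ is where the value $0.525$ first appears.

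\emph{Step 2 (sieve decomposition).} I would then apply Buchstab's identity to $S(\AA,(x+y)^{1/2})$ repeatedly, peeling off prime factors $p_1>p_2>\cdots$, and at each stage either evaluate a term asymptotically (when the factorisation of $n$ it records lies in a Type~I or Type~II range from Step~1, contributing $\sim\tfrac{y}{x}$ times the corresponding model quantity), or retain it for further decomposition, or, failing that, discard it. Discarded terms are bounded by the analogous sifting quantities for the model interval via the reversal-of-roles trick and the fundamental lemma of sieve theory, so the identity delivers
\[
\pi(x+y)-\pi(x)\;\geq\;(1-\varepsilon)\,\tfrac{y}{x}\big(\pi(2x)-\pi(x)\big)\;\gg\;\tfrac{y}{\log x},
\]
provided the total mass of the discarded pieces — estimated numerically with the Buchstab function $\omega$ — is strictly smaller than the main term. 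It is exactly this numerical optimisation of the Buchstab combinatorics, fed by the exponents of Step~1, that forces $\eta\geq0.525$.

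The main obstacle is Step~1, and specifically obtaining the Type~II asymptotic over the widest possible range of $N$: this is where the full strength of Watt's theorem and a delicate large-values argument are needed, and any attempt to push the exponent below $0.525$ (the open problem alluded to in the paper's discussion) would require genuinely new Dirichlet-polynomial mean-value estimates rather than a rearrangement of the sieve. For the present paper the statement is used only as a black box, so in practice I would simply invoke \cite{Baker_Harman_Pintz} for the complete argument.
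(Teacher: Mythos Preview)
The paper does not prove this statement at all; it is stated as a cited Number Theory Fact with reference to \cite{Baker_Harman_Pintz} and used purely as a black box (to deduce Corollary~\ref{New_Corollary} from Theorem~\ref{New_Theorem}). Your final sentence already recognises this, and that is the correct disposition here: no proof is expected or given in the paper, so your sketch of Harman's sieve---while a reasonable high-level outline of how Baker--Harman--Pintz actually proceeds---goes well beyond what the paper does, and for the purposes of matching the paper you should simply cite \cite{Baker_Harman_Pintz}.
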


\section{Improvements}

We start from the following version of \cite[Number Theory Fact~3]{harper2022notecharactersumsshort}, which is a main component separating our approach from the one in \cite{BasakNathZaharescu}. As in \cite{harper2022notecharactersumsshort}, \cite{Davenport2022TheDO} and \cite{BasakNathZaharescu}, we only need upper bounds for the \say{variance} terms that will arise in next section, instead of asymptotic equalities. We get them by employing the Selberg's sieve weights. This allows us to salvage the factor of $\log Q$ that was lost in \cite{BasakNathZaharescu}. Intuitively, the fact below is telling us, that when averaged over a set of prime moduli, the characters behave like the extended Rademacher random multiplicative functions.

\begin{fact}\label{nt_fact}
    Let $f(n)$ be an extended Rademacher random multiplicative function, $\eta \in [0.01,1]$. Then, uniformly for any large $Q, N \leq Q^{\eta}$ and any complex coefficients $(a_n)_{n \leq N}$ we have 
    $$\frac{\log Q}{Q^{\eta}}\sum_{Q \leq q \leq Q+Q^{\eta}}\left|\sum_{n \leq N}a_n\lp(\frac{n}{q}\rp)\right|^2 \ll \E \left|\sum_{n \leq N}a_nf(n)\right|^2 + \frac{1}{Q^{\eta/2}}\left(\sum_{n \leq N}|a_n|\sqrt{s(n)}\right)^2,$$ where the sum on the left hand side is over primes $q$ and $s(n)$ denotes the squarefree part of $n$. 
\end{fact}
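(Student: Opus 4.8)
\emph{Plan of proof.} The argument has three moves: a preliminary algebraic reduction that makes every off‑diagonal product a non‑square, a Selberg sieve that trades the primes $q$ for all integers in the window $[Q,Q+Q^{\eta}]$, and a Pólya--Vinogradov bound that makes the resulting off‑diagonal character sums cancel. Throughout we use only that an \emph{upper} bound is required, so a sieve majorant (not an asymptotic) suffices and any coprimality deficits are harmless.

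\emph{Step 1: reduction to squarefree support.} Since $q$ is prime and $q>N$, we have $\lp(\tfrac{u^2}{q}\rp)=1$ for every $u$ that occurs, so writing each $n\le N$ as $n=su^2$ with $s$ squarefree gives $\sum_{n\le N}a_n\lp(\tfrac nq\rp)=\sum_{s\le N}\lp(\tfrac sq\rp)b_s$ with $b_s:=\sum_{u\,:\,su^2\le N}a_{su^2}$. Using perfect orthogonality of the $f(s)$ one checks $\E|\sum_s b_s f(s)|^2=\E|\sum_n a_n f(n)|^2$, while $\sum_s|b_s|\sqrt s\le\sum_n|a_n|\sqrt{s(n)}$; so it is enough to prove the Fact for $(a_n)$ supported on squarefree $n\le N$, where now $s(n)=n$ and $\E|\sum_n a_n f(n)|^2=\sum_n|a_n|^2$. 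The point is that for squarefree $j\ne k$ the product $jk$ is never a perfect square, so every off‑diagonal term below carries a non‑principal character.

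\emph{Step 2: open the square and apply the sieve.} Expanding, $\sum_{Q\le q\le Q+Q^{\eta}}|\sum_n a_n(\tfrac nq)|^2=\sum_{j,k\ \mathrm{sqfree}\le N}a_j\overline{a_k}\sum_{Q\le q\le Q+Q^{\eta}}\lp(\tfrac{jk}{q}\rp)$. Let $\nu(m)=\big(\sum_{d\mid m,\ d\le D}\lambda_d\big)^2$ be the Selberg upper‑bound weights sieving the primes up to $z=D:=Q^{\eta/5}$, so that $\nu\ge0$, $\lambda_1=1$, $|\lambda_d|\le1$, $\nu(m)\ge\mathbf 1_{m\ \mathrm{prime}}$ on the window (every prime there exceeds $z$), and $\sum_{Q\le m\le Q+Q^{\eta}}\nu(m)\ll Q^{\eta}/\log Q$ (here one uses $\eta\ge0.01$ so that $\log z\asymp\log Q$ and the error $O(D^2)$ is $\ll Q^{\eta}/\log Q$). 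Thus $\sum_{q}|\sum_n a_n(\tfrac nq)|^2\le\sum_{Q\le m\le Q+Q^{\eta}}\nu(m)|\sum_n a_n(\tfrac nm)|^2=\sum_{j,k}a_j\overline{a_k}\sum_m\nu(m)\lp(\tfrac{jk}{m}\rp)$. The diagonal $j=k$ contributes $\sum_j|a_j|^2\sum_{m:(m,j)=1}\nu(m)\le\big(\sum_m\nu(m)\big)\sum_j|a_j|^2\ll\tfrac{Q^{\eta}}{\log Q}\,\E|\sum_n a_n f(n)|^2$, which after multiplying by $(\log Q)/Q^{\eta}$ is of the required shape; note the coprimality condition only helps.

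\emph{Step 3: the off‑diagonal.} For $j\ne k$, expanding $\nu$ gives $\sum_m\nu(m)\lp(\tfrac{jk}{m}\rp)=\sum_{d_1,d_2\le D}\lambda_{d_1}\lambda_{d_2}\lp(\tfrac{jk}{[d_1,d_2]}\rp)\sum_{m'\in I_{d_1,d_2}}\lp(\tfrac{jk}{m'}\rp)$ with $I_{d_1,d_2}$ an interval of length $\asymp Q^{\eta}/[d_1,d_2]$. The map $m'\mapsto\lp(\tfrac{jk}{m'}\rp)$ is the Kronecker symbol of the non‑square $jk$, i.e.\ (after removing the primes $p\mid\gcd(j,k)$ by an inclusion--exclusion costing a factor $2^{\omega(jk)}=Q^{o(1)}$) a real non‑principal Dirichlet character of conductor $\ll s(jk)$; so Pólya--Vinogradov gives $|\sum_{m'\in I}(\tfrac{jk}{m'})|\ll Q^{o(1)}\sqrt{s(jk)}$. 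With $|\lambda_d|\le1$, the $\le D^2$ pairs $(d_1,d_2)$, and $s(jk)\le jk$ for squarefree $j,k$, we get $|\sum_m\nu(m)(\tfrac{jk}{m})|\ll Q^{o(1)}D^2\sqrt{jk}$, hence the off‑diagonal total is $\ll Q^{o(1)}D^2\sum_{j,k}|a_j||a_k|\sqrt j\sqrt k=Q^{o(1)}D^2\big(\sum_n|a_n|\sqrt{s(n)}\big)^2$. Multiplying by $(\log Q)/Q^{\eta}$ and inserting $D=Q^{\eta/5}$ yields $Q^{2\eta/5-\eta+o(1)}(\log Q)\big(\sum_n|a_n|\sqrt{s(n)}\big)^2\ll Q^{-\eta/2}\big(\sum_n|a_n|\sqrt{s(n)}\big)^2$, again using $\eta\ge0.01$ to absorb the $Q^{o(1)}$ and logarithmic losses. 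Combining with Step 2 proves the Fact.

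\emph{Main obstacle.} The conceptual crux is the reduction in Step 1: without restricting to squarefree‑supported coefficients, the diagonal terms produced by the majorant $\nu$ (which lives on all integers, not just primes) cannot be controlled in terms of $\E|\sum a_n f(n)|^2$. After that, the difficulty is purely bookkeeping and uniformity — one must choose $D$ simultaneously as a fixed positive power of $Q$ (so the sieved mass is $\ll Q^{\eta}/\log Q$) and small enough that $D^2Q^{o(1)}\ll Q^{\eta/2}$ and all $O(D^2)$ sieve errors are negligible, uniformly over $\eta\in[0.01,1]$, over $N\le Q^{\eta}$, and over $(a_n)$ — which is exactly what the fixed lower threshold $\eta\ge0.01$ buys us.
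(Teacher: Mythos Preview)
Your proof is correct and follows essentially the same route as the paper's: reduce to squarefree support, majorize the indicator of primes by Selberg upper-bound weights so the diagonal recovers $\E|\sum a_nf(n)|^2$, and kill the off-diagonal with P\'olya--Vinogradov applied to the Kronecker symbol $m\mapsto(\tfrac{jk}{m})$. The only cosmetic differences are that the paper works with the convolved weights $\lambda_e=\sum_{[d_1,d_2]=e}\lambda_{d_1}\lambda_{d_2}$ supported up to $Q^{\eta/2}$ and invokes the bound $\sum_e|\lambda_e|\ll Q^{\eta/2}/(\log Q)^2$ directly, whereas you keep the squared form with $D=Q^{\eta/5}$ and count $D^2$ pairs; and your inclusion--exclusion over $p\mid\gcd(j,k)$ is unnecessary (the Kronecker symbol is already a non-principal character of conductor $\le 4jk$), though it does no harm.
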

\begin{proof}
    The proof is primarily an adaptation of the proof \cite[~ Number Theory Fact 3]{harper2022notecharactersumsshort}, restricting it to short intervals. The same argument appeared earlier in \cite[Lemma 9]{Montgomery_Vaughan_1979}. The main idea is to use the sieve theoretic machinery to upper bound the left hand side, in a way where we obtain some non-trivial cancellations by switching from sum over primes in short intervals, to sum over essentially all numbers. This allows us to use P\'olya-Vinogradov, while keeping contributions coming from perfect squares under control. Note that for prime $q$ and $n \leq q$, we have $\lp(\tfrac{n}{q}\rp) = \lp(\tfrac{s(n)}{q}\rp)$. Hence, we can rewrite the inner sum on the left hand side as 
    \begin{equation*}
        \sum_{\underset{s \text{ squarefree}}{s \leq N} }\lp(\frac{s}{q}\rp)\sum_{\underset{n \leq N}{s(n)=s}}a_n.
    \end{equation*}At the same time, note that from the definition of Rademacher random multiplicative function it follows that 
    \begin{equation*}
        \E\lp|\sum_{ n \leq N}a_nf(n)\rp|^2 = \E\lp|\sum_{\underset{s \text{ squarefree}}{s \leq N} }f(s)\sum_{\underset{n \leq N}{s(n)=s}}a_n\rp|^2 = \sum_{\underset{s \text{ squarefree}}{s \leq N} }\lp|\sum_{\underset{n \leq N}{s(n)=s}}a_n\rp|^2.
    \end{equation*}Now, rewriting the left hand side of the original sum we have 
    \begin{equation}\label{eqnsieve1}
        \frac{\log Q}{Q^{\eta}}\sum_{Q \leq q \leq Q+Q^{\eta}}\lp|\sum_{n \leq N} a_n\lp(\frac{n}{q}\rp)\rp|^2=\frac{\log Q}{Q^{\eta}}\sum_{Q \leq q \leq Q+Q^{\eta}}\sum_{\underset{s \text{ squarefree}}{s_1,s_2 \leq N} }\lp(\frac{s_1s_2}{q}\rp)\left(\sum_{\underset{n \leq N}{s(n)=s_1}}a_n\right)\overline{\left(\sum_{\underset{n \leq N}{s(n)=s_2}}a_n\right)}.
    \end{equation}We proceed by introduction of similar Selberg's sieve weights as \cite{harper2022notecharactersumsshort}, with slight difference coming from the fact that our short intervals can be a lot shorter than the ones there. We can make a choice of a sequence $\lambda_e$ such that $\sum_{e\mid q}\lambda_e \geq \I\{p \mid q, \implies p\geq Q^{\eta/4}\}$ for all q, $\lambda_e=0$, for all $e>Q^{\eta/2}$, $\sum_{Q < q \leq Q+Q^{\eta}}\sum_{e \mid q}\lambda_e \ll \frac{Q^{\eta}}{\log Q} \text{ and }\sum_e|\lambda_e| \ll \frac{Q^{\eta/2}}{(\log Q)^2}$. Using them, we upper bound \eqref{eqnsieve1} by
    \begin{equation}
        \frac{\log Q}{Q^{\eta}}\sum_{\substack{Q\leq q \leq Q+Q^{\eta} \\q \text{ odd }}}\lp(\sum_{e \mid q}\lambda_e\rp)\sum_{\substack{s_1, s_2 \leq N \\ \text{squarefree}}}\lp(\sum_{\substack{n \leq N\\ s(n)=s_1}}a_n\rp)\overline{\lp(\sum_{\substack{n \leq N\\ s(n)=s_2}}a_n\rp)}\lp(\frac{s_1s_2}{q}\rp).
    \end{equation}The main contribution in this kind of sums comes from the terms where $s_1s_2$ (and so $n_1n_2$) is a perfect square. Since $s_1, s_2$ are squarefree, this is counted by the diagonal summands $s_1=s_2$. Notice that we can bound these by the expectation of the sum over the random multiplicative functions and the sieve weights allow us to preserve essentially the density of the primes. 
    \begin{equation*}
        \sum_{\substack{s \leq N\\ \text{squarefree}}}\lp|\sum_{\substack{n \leq N\\s(n)=s}}a_n\rp|^2 \frac{\log Q}{Q^{\eta}}\sum_{\substack{Q \leq q \leq Q+Q^{\eta}\\ q \text{ odd}}}\lp(\sum_{e \mid q}\lambda_e\rp)\lp(\frac{s^2}{q}\rp) \ll \sum_{\substack{s \leq N\\ \text{squarefree}}}\lp|\sum_{\substack{n \leq N \\ s(n)=s}}a_n\rp|^2 = \E\lp|\sum_{n \leq N }a_nf(n)\rp|^2.
    \end{equation*} This step could be done without the help of sieve-theory, but it is necessary in bounding the contributions for $s_1\neq s_2$. In that case, $s_1s_2$ is not a perfect square and we want to use the cancellations coming from the fact that  $(\tfrac{s_1s_2}{q})$ is a non-principal Dirichlet character of a conductor at most $4s_1s_2$. To get these cancellations, sieve theory allows us to pass from sums over primes to sums over essentially all integers in a given interval, at which point we can use the P\'olya-Vinogradov bound. Hence, the off-diagonal contributions can be bounded by 
    \begin{align*}
        &\ll\frac{\log Q}{Q^{\eta}}\sum_{\substack{e \leq Q^{\eta/2}\\e \text{ odd}}}|\lambda_e|\sum_{\substack{s_1\neq s_1 \leq N\\ \text{squarefree}}}\lp|\sum_{\substack{n \leq N\\ s(n)=s_1}}a_n\rp|\lp|\sum_{\substack{n \leq N \\ s(n)= s_2}}a_n\rp|\lp|\sum_{\substack{Q \leq q \leq Q+Q^{\eta}\\ q \text{ odd }\\ e\mid q}}\lp(\frac{s_1s_2}{q}\rp)\rp|
        \\ &\ll \frac{\log Q}{Q^{\eta}}\sum_{\substack{e \leq Q^{\eta/2}\\ e \text{ odd}}}|\lambda_e|\sum_{\substack{s_1 \neq s_2 \leq N\\ \text{ squarefree}}}\lp|\sum_{\substack{n \leq N\\ s(n) = s_1}}a_n\rp| \lp|\sum_{\substack{n \leq N\\ s(n) = s_2}}a_n\rp| \sqrt{s_1s_2}\log(s_1s_2).
    \end{align*} Since our weights satisfy $\sum_e |\lambda_e| \ll \tfrac{Q^{\eta/2}}{(\log Q)^2}$ we can check that this expression is suitably small. 
\end{proof}
\begin{remark}
    Note that a more suitable choice of sieve weights would allow us to save a higher power of $Q$ in the error term, but this will be enough for our goals.
\end{remark}Notice that Number Theory Fact \ref{nt_fact} immediately gives us the following bound 
\begin{equation}\label{bound_on_variance}
    \frac{\log Q}{Q^{\eta}}\sum_{\substack{Q \leq q \leq Q+Q^{\eta}}}\lp|\sum_{n \leq N}a_n\lp(\frac{n}{q}\rp)\rp|^2 \ll \E\lp|\sum_{n \leq N}a_nf(n)\rp|^2 + \frac{N}{Q^{\eta/2}}\lp(\sum_{n \leq N}|a_n|\rp)^2. 
\end{equation}This allows us to give improvements to \cite[Lemma~3.2]{BasakNathZaharescu}, which we rewrite in terms of an average of the primes in the interval $Q, Q+Q^{\eta}$. The point, is that the right hand side would then tend to zero. As we will see in the proof of Theorem \ref{New_Theorem}, it is enough to prove this and subsequent results for $\eta=0.01$, which we have chosen as our lower bound on $\eta$. 
\begin{lemma}\label{variance_lemma_prob}
    Let $r$ be a fixed positive integer. Let $g$ be an increasing, divergent function such that 
    \begin{itemize}
        \item $g'(c)=O\lp((g(Q))^{0.99}Q^{-0.01}\rp), \forall c \in [Q,Q+Q^{0.01}]$ as $Q \ra \infty$,
        \item $g(Q) \ll Q^{\delta}, \forall \delta>0$.
    \end{itemize}Let $r \leq h \leq (g(Q+Q^{0.01}))^{\frac{1}{2500r^2}}.$ Then we have 
    \begin{equation}
        \frac{\log Q}{Q^{0.01}}\sum_{\substack{Q \leq q \leq Q+Q^{0.01} \\ q \text{ prime }}}\lp(\frac{1}{g(Q)}\sum_{1 \leq m \leq g(Q)}S_h^{2r}(m,q) - \mu_{2r}(h-\theta r)^r\rp)^2 \ll (g(Q))^{-1+1/300}
    \end{equation}and 
    \begin{equation}
       \frac{\log Q}{Q^{0.01}} \sum_{\substack{Q \leq q \leq Q+Q^{0.01} \\ q \text{ prime }}}\lp(\frac{1}{g(Q)}\sum_{1 \leq m \leq g(Q)}S_h^{2r-1}(m,q)\rp)^2 \ll (g(Q))^{-1+1/300},
    \end{equation}where $\mu_{2r}$ are the $2r'$th moments of the standard normal and $\theta : = \theta(h,r) \in [0,1]$ is defined in the proof. 
\end{lemma}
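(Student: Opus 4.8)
The plan is to expand the $2r$-th power, average over the starting point to peel off a combinatorial main term $\mu_{2r}(h-\theta r)^r$, and then estimate the remaining character sum --- averaged over the primes $q\in[Q,Q+Q^{0.01}]$ --- by invoking \eqref{bound_on_variance}; it is this last step, rather than a direct use of P\'olya--Vinogradov, that removes the $\log Q$ lost in \cite{BasakNathZaharescu}. Throughout, $r$ is fixed and $\delta=\delta(r)>0$ is taken small.

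\emph{Set-up.} By complete multiplicativity, $S_h^{2r}(m,q)=\sum_{\vec n\in(m,m+h]^{2r}}\chi_q(n_1\cdots n_{2r})$, so averaging over $1\le m\le g(Q)$ gives $\frac{1}{g(Q)}\sum_m S_h^{2r}(m,q)=\sum_{N\le\NN}c_N\chi_q(N)$, where $\NN:=(g(Q)+h)^{2r}$ and the non-negative weights $c_N:=\frac1{g(Q)}\#\{(m,\vec n):1\le m\le g(Q),\ \vec n\in(m,m+h]^{2r},\ \prod_j n_j=N\}$ satisfy $\sum_N c_N=h^{2r}$; here $\chi_q(N)=\left(\tfrac Nq\right)$ and $\chi_q(N)=1$ whenever $N$ is a perfect square, because $N\le\NN\ll Q^{\delta}<q$ once $g(Q)\ll Q^{\delta}$. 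The number of ``matchings'' in $(m,m+h]^{2r}$ (tuples whose multiset has all multiplicities even) is translation invariant, hence a function of $h,r$ only, equals $(2r)!\,[t^{2r}](\cosh t)^h$, and --- using $\cosh t\le e^{t^2/2}$ --- lies between $\mu_{2r}h^{\underline r}\ge\mu_{2r}(h-r)^r$ and $\mu_{2r}h^r$; so there is a unique $\theta=\theta(h,r)\in[0,1)$ with this count equal to $\mu_{2r}(h-\theta r)^r$. Subtracting the matching part of $c_N$ when $N$ is a square (and nothing otherwise) produces weights $0\le a_N\le c_N$ with $\sum_N a_N\le h^{2r}$ and
\begin{equation*}
\frac{1}{g(Q)}\sum_{1\le m\le g(Q)}S_h^{2r}(m,q)-\mu_{2r}(h-\theta r)^r=\sum_{N\le\NN}a_N\chi_q(N).
\end{equation*}

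\emph{Applying \eqref{bound_on_variance}.} Since $\NN\ll Q^{2r\delta}\le Q^{0.01}$, \eqref{bound_on_variance} with $\eta=0.01$ and coefficients $(a_N)$ bounds the quantity in the first assertion by
\begin{equation*}
\ll\E\left|\sum_N a_Nf(N)\right|^2+\frac{\NN}{Q^{0.005}}\left(\sum_N a_N\right)^2.
\end{equation*}
The second term is $\ll Q^{2r\delta-0.005}h^{4r}\ll Q^{-0.003}\ll (g(Q))^{-1+1/300}$ for $\delta$ small (using $g(Q)\ll Q^{\delta}$ again). For the first, $f$ is real, completely multiplicative, and $\E f(N_1)f(N_2)=\I\{s(N_1)=s(N_2)\}$, so with $\mathcal E:=\sum_{N=\square}a_N=\frac1{g(Q)}\#\{(m,\vec n)\text{ in one window of length }h:\ \prod_j n_j=\square,\ \vec n\text{ not a matching}\}$ and $\Theta$ the number of pairs $(m_i,\vec n^{(i)})_{i=1,2}$ from two windows of length $h$ with $(\prod_j n^{(1)}_j)(\prod_j n^{(2)}_j)=\square$ but neither factor separately a square,
\begin{equation*}
\E\left|\sum_N a_Nf(N)\right|^2=\left(\sum_{N=\square}a_N\right)^2+\sum_{\substack{s>1\\ s\text{ squarefree}}}\left(\sum_{s(N)=s}a_N\right)^2=\mathcal E^2+\frac{\Theta}{(g(Q))^2}.
\end{equation*}

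\emph{The combinatorial estimates, the odd case, and the obstacle.} The counts $\mathcal E$ and $\Theta$ are handled exactly as in the proof of \cite[Lemma~3.2]{BasakNathZaharescu}. A configuration contributing to $\mathcal E$ is determined by its set of odd-multiplicity values, of even size $\ell$, whose product is a perfect square: for $\ell=2$ an elementary argument forces the base point to be $O(h^2)$, while for $\ell\ge4$ Corollary \ref{evertse_corollary} bounds, for each fixed shape, the number of admissible base points by $7^{13+9\omega(D)}$ with $\omega(D)\le\log_2 D=O_r(\log h)$ (since $D$ is a product of $O_r(1)$ differences each $\le h$); summing over the $h^{O_r(1)}$ shapes, and absorbing divisor-type weights by a standard shifted divisor moment, gives a bound $\ll_r h^{O_r(1)}$ \emph{independent of $g(Q)$}, hence $\mathcal E\ll_r h^{O_r(1)}/g(Q)$. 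For $\Theta$ one writes $N_i=G_i(m_i)$ for degree-$2r$ polynomials $G_i$ determined by the within-window shapes, reducing to counting $(m_1,m_2)$ with $G_1(m_1)G_2(m_2)$ a square; for fixed $m_1$ this is at most the number of $m_2$ with $s(G_1(m_1))\,G_2(m_2)$ a square, which the same Evertse--Silverman input controls, and summing over $m_1\le g(Q)$ and over the shapes (again with shifted divisor moments) yields $\Theta\ll_r g(Q)\,h^{O_r(1)}(\log g(Q))^{O_r(1)}$. The hypothesis on $g'$ gives $g(Q+Q^{0.01})=g(Q)(1+o(1))$, so $h\le g(Q)^{1/(2000r^2)}$ for large $Q$, and the exponent is generous enough that $\mathcal E^2$ and $\Theta/(g(Q))^2$ are each $\ll (g(Q))^{-1+1/300}$; combining the three contributions proves the first inequality. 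The second (odd-power) inequality is identical with $2r$ replaced by $2r-1$: an odd number of factors admits no matchings, so the ``main term'' $\sum_{N=\square}c'_N$ is itself an error of size $O_r(h^{O_r(1)}/g(Q))$ by the Corollary \ref{evertse_corollary} argument, and the subtracted $\mu_{2r}(h-\theta r)^r$ simply disappears. \textbf{The main obstacle is this combinatorial input} --- showing the per-shape count of admissible base points is a fixed power of $h$ rather than something growing with $g(Q)$ --- which is precisely where Corollary \ref{evertse_corollary} is indispensable and which we import wholesale from \cite{BasakNathZaharescu}, the restrictive assumption $h\le (g(Q+Q^{0.01}))^{1/(2500r^2)}$ being exactly what provides the slack to absorb the $h^{O_r(1)}$ factors. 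The genuinely new step --- the passage to the Rademacher model via Number Theory Fact \ref{nt_fact}/\eqref{bound_on_variance} --- is, by contrast, immediate.
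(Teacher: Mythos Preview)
Your overall architecture is the same as the paper's: write the centred moment as $\sum_N a_N\chi_q(N)$, apply \eqref{bound_on_variance} to pass to the Rademacher model, then bound $\E|\sum a_N f(N)|^2$ combinatorially via Evertse--Silverman. Your decomposition $\mathcal E^2+\Theta/(g(Q))^2$ is algebraically equivalent to the paper's three-term expansion of $\E|X-c|^2$, and your treatment of $\mathcal E$ (and of the second error term coming from \eqref{bound_on_variance}) is correct.

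The gap is in your bound for $\Theta$. You fix $m_1$ and claim that the number of $m_2$ with $s(G_1(m_1))\,G_2(m_2)$ a perfect square is ``controlled by the same Evertse--Silverman input''. But Corollary~\ref{evertse_corollary} applies only to monic polynomials of the shape $X(X+\alpha_1)\cdots(X+\alpha_k)$, and $s(G_1(m_1))\,G_2(X)$ is not of this form. If you instead invoke the general Fact~1(b), the set $S$ must contain every prime dividing $s(G_1(m_1))$; since $s(G_1(m_1))\le (g(Q)+h)^{2r}$ can have up to $O_r(\log g(Q))$ prime factors, the resulting bound is $7^{O_r(\log g(Q))}=(g(Q))^{O_r(1)}$, which after summing over $m_1$ gives $\Theta\ll (g(Q))^{O_r(1)+1}$ --- far too weak. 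Your asserted bound $\Theta\ll_r g(Q)\,h^{O_r(1)}(\log g(Q))^{O_r(1)}$ therefore does not follow.

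The paper avoids this by fixing the \emph{difference} $m=m_2-m_1$ rather than $m_1$. Then $\prod_i(m_1+\alpha_i)\prod_j(m_1+m+\beta_j)$ is a monic polynomial in $m_1$, to which Corollary~\ref{evertse_corollary} applies directly after reduction. The price is that the cross-shifts $\alpha_i-(m+\beta_j)$ can be as large as $g(Q)+h$, so the discriminant is $\le (g(Q)+h)^{O(r^2)}$; the crude bound $\omega(D)\le\log_2 D$ that you used for $\mathcal E$ would now give $(g(Q))^{O_r(1)}$ and fail. The paper therefore uses the sharper $\omega(D)\le(1+o(1))\log D/\log\log D$, which yields $(g(Q))^{o(1)}$ per $(\alpha,\beta,m)$ and hence $\Theta\ll h^{4r}(g(Q))^{1+o(1)}$ --- enough, but strictly weaker than your claimed polylogarithmic factor. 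You also need to treat separately the configurations with $|m|\le h$ where the merged $4r$-tuple fully pairs off via cross-window matches (the paper's $k=0$ sub-case), contributing an additional $O(h^{4r+1}g(Q))$ to $\Theta$; this is trivial but must be said.
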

\begin{proof}We shall only prove the lemma for $2r,$ the second statement follows the same argument. 
    Our proof uses the same ides as the proof in \cite{BasakNathZaharescu}, but first passing from the relevant sum to expectations. Note that the term $-\mu_{2r}(h-\theta r)^r$ can be taken to be the coefficient of $\lp(\tfrac{1}{q}\rp)$. Using  \eqref{bound_on_variance} for $\eta=0.01$ we have, for $f$ being the extended Rademacher random multiplicative function
    \begin{align*}
        &\hspace{25pt}\frac{\log Q}{Q^{0.01}}\sum_{\substack{Q \leq q \leq Q+Q^{0.01} \\ q \text{ prime }}}\lp(\frac{1}{g(Q)}\sum_{1 \leq m \leq g(Q)}S_h^{2r}(m,q) - \mu_{2r}(h-\theta r)^r\rp)^2\\
        &\ll \E\lp[\lp|\frac{1}{g(Q)}\sum_{1 \leq m \leq g(Q)}\sum_{\alpha \in [h]^{2r}}f\lp(\prod_{i=1}^{2r}m+\alpha_i\rp)- \mu_{2r}(h-\theta r)^r\rp|^2\rp] + \frac{(g(Q)+h)^{2r}}{Q^{0.005}}\lp(\sum_{n \leq (g(Q)+h)^{2r}} |a_n|\rp)^2,
    \end{align*}with constants $a_n$ implied by construction. We will want the right hand side of the above to go to zero, as that would imply further that, as $Q \ra \infty,$ we will be able to prove the gaussian behaviour for almost all the primes in the interval, using method of moments argument. First, we deal with bounding the expectation term. It is equal to 
    \begin{align}\label{big_expectation}
        &\frac{1}{(g(Q))^{2}}\E \lp[\sum_{1 \leq m_1,m_2 \leq g(Q)}\sum_{\alpha,\beta \in [h]^{2r}}f\lp(\prod_{i=1}^{2r}(m_1+\alpha_i)(m_2+\beta_i)\rp)\rp] \\
        &-2\mu_{2r}(h-\theta r)^r\E\lp[\frac{1}{g(Q)}\sum_{1 \leq m \leq g(Q)}\sum_{\alpha \in [h]^{2r}}f\lp(\prod_{i=1}^{2r}(m+\alpha_i)\rp)\rp]+\mu_{2r}^2(h-\theta r)^{2r}.
     \end{align}Starting now, we proceed to conduct the same analysis as \cite{BasakNathZaharescu}. Note however, that unlike Dirichlet characters, random multiplicative functions are perfectly orthogonal, which makes the analysis much easier. We start with the second expectation and proceed accordingly. In both cases, we will show that as in the case of Dirichlet characters, the most significant contributions to the sum will come from these $n$'s which are perfect squares. Hence, we fix $\alpha \in [h]^{2r}$ and estimate for how many $m's$ in our range $\prod_{i=1}^{2r}(m+\alpha_i)$ is a perfect square. For this purpose consider the following algorithm: 
\begin{algorithm}
 Consider $\alpha_1$, the first coordinate of $\alpha$. Then, in order, go through the remaining coordinates of $\alpha$ until we find $\alpha_j$ such that $\alpha_j = \alpha_i$.  If no such $\alpha_j$ exists, proceed to do the same with $\alpha_2$ and so on, until we go through all coordinates of the vector. If such coordinate exists proceed to delete the two coordinates from the vector, and repeat the procedure for the next non-deleted coordinate of $\alpha$. The algorithm terminates when we reach the last non-deleted coordinate of $\alpha$. Let us call the algorithm $\Psi$ and the resulting new vector $\gamma=\Psi(\alpha)\in [h]^{2k}, k \leq r$. 
\end{algorithm}
Notice that for any $m, \prod_{i=1}^{2r}(m+\alpha_i)$ is a perfect square iff $\prod_{i=1}^{2k}(m+\gamma_i)$ is one. Consider the three following cases:
\begin{itemize} 
    \item Assume $k \geq 2$. We want to calculate the number of $m \in \{1,...,g(Q)\}$ such that $\prod_{i=1}^{2k}(m+\gamma_i)$ is a perfect square. Let $f(m) = \prod_{i=1}^{2k}(m+\gamma_i)$. Corollary \ref{evertse_corollary} says that given distinct $\gamma_i$ (as in our case), the amount of solutions to the diophantine equation 
\begin{equation}\label{evertse}
Y^2 = f(m)
\end{equation} is less than $7^{13+9\omega(D(f))}$, where $\omega$ is the distinct prime divisor function and $D(f)$ is the discriminant of the polynomial $f$. We have the well known bound (see for example \cite{Montgomery_Vaughan_2006}) 
\begin{equation*}
\omega(n) \leq \frac{\log n}{\log \log n}(1+o(1))
\end{equation*}Note that the RHS of the inequality is an increasing function of $n$. The discriminant of $f(m)$ in our case is given by $\prod_{i<j}^{2k}(\gamma_i-\gamma_j)^2$. Therefore, we can say that the amount of solutions to \eqref{evertse} is bounded by 

\begin{align*}
&\ll e^{9\log 7\omega(D(f))} \ll \lp(e^{(\log D(f))}\rp)^{9\log 7/\log \log D(f)} \\
&= \lp(\prod_{i<j}^{2k}(\gamma_i-\gamma_j)^2\rp)^{9 \log 7/ \log \log (\prod_{i<j}^{2k}(\gamma_i-\gamma_j)^2)} \ll \lp(h^{4r^2}\rp)^{9\log 7 / \log \log ( h^{4r^2})}\\
&\ll_{\epsilon} (g(Q))^{\epsilon},\\
\end{align*}which we get recalling $h \leq g(Q+Q^{0.01})^{1/2500r^2}$, for any $\epsilon >0$.
   \item Assume $k=1$. In this case we want to count the number of solutions to the diophantine equation
\begin{equation*}
Y^2 = (m+\alpha_i)(m+\alpha_j),
\end{equation*}and we use an elementary divisor function argument to find a bound of sufficient order. Without loss of generality assume $\alpha_i<\alpha_j$. Let $D = m+\alpha_i, Z = \alpha_j-\alpha_i$. Then, we want to find to find the number of integer solutions to 
\begin{equation*}
Y^2 = D(D+Z) \iff 4Y^2+Z^2 = (2D+Z)^2 \iff Z^2 = (2D+Z-2Y)(2D+Z+2Y).
\end{equation*} Therefore, notice that given $Z$, $D, Y$ must be such that $2D+Z-2Y = d, 2D+Z+2Y = Z^2/d$, for some divisor $d$ of $Z^2$. Since for any such divisor, the above gives a unique solution, the number of such pairs is trivially bounded by $\tau(Z^2)\ll h^{\epsilon} \ll (g(Q))^{ \epsilon}$, for any $\epsilon>0.$
     \item Assume $k=0$. In this case, $\prod_{i=1}^{2r}(m+\alpha_i)$ is a perfect square for all $m$. 
\end{itemize}Write $K(r, h)$ for the number of vectors arising in case $3$. 
Now, notice that if $n$ is not a perfect square we have $$\E [a_nf(n)] = a_n\E[f(n)] = a_n\E[f(s)]= 0,$$ where $s$ is the squarefree part of $n$. Therefore we have
\begin{equation}\label{expectation_1}
      \E\lp[\frac{1}{g(Q)}\sum_{1 \leq m \leq g(Q)}\sum_{\alpha \in [h]^{2r}}f\lp(\prod_{i=1}^{2r}m+\alpha_i\rp)\rp] = K(r,h) + O(g(Q)^{-1+\epsilon}g(Q)^{2r/2500r^2}),
\end{equation} for any $\epsilon >0.$ Therefore, we need to estimate $K(r,h)$. This is the amount of $\alpha \in [h]^{2r}$ such that $\Psi(\alpha)$ vanishes. Notice that there are ${h \choose r}$ ways to choose exactly $r$ numbers less than or equal to $h$ without repetitions. Out of these numbers, we have $r$ options as the choice for the first coordinate of $\alpha$ and $2r-1$ choices for the coordinate where it will appear again. Proceeding that way says that there are at least 
\begin{equation*}
    {h \choose r}\cdot r \cdot (2r-1)\cdot (r-1) \cdot (2r-3)\cdot ... \cdot 2 \cdot 3\cdot 1={h \choose r}\cdot r!\cdot (2r-1)\cdot(2r-3)...\cdot 1 = h\cdot (h-1)\cdot...\cdot (h-r+1) \cdot \mu_{2r}
\end{equation*}ways of choosing $\alpha$ such that $\Psi(\alpha)$ vanishes, so $K(r,h) \geq \mu_{2r}h(h-1)...(h-r+1)$.\newline
At the same time, note that if for that to happen, we need to have at most r distinct pairs of numbers from $\{1,2,...,h\}$ to form $\alpha$. The number of ways of choosing them is $h^r$ and after the choice, there is at most $(2r-1) \cdot (2r-3) \cdot ...\cdot 1$ ways of arranging them as $r$ pairs. Therefore, we can see that $K(r,h) \leq \mu_{2r}h^r$.  Hence we have, that for any $h,r$ there exists $\theta: = \theta(h,r)\in [0,1] \text{ so that}$
\begin{equation}\label{definition_of_theta}
    K(r,h) = \mu_{2r}(h-\theta r)^r.
\end{equation}
The proof proceeds similarly when estimating the first expectation, but we need to modify the algorithm slightly as well as deal with some differences which arise in cases $1$ and $3$ of the algorithm. For convenience, we shall use the same notation. 
We shall now deal with the first expectation term in a similar way, so we want to count the contribution of the perfect squares $\prod_{i=1}^{2r}(m_1+\alpha_i)(m_2+\beta_i)$. As before, we do it through counting $m_1$ which form perfect squares for fixed vectors $\alpha, \beta \in [h]^{2r}$ and fixed $m_2$. 
Consider the following algorithm:
\begin{algorithm}\label{algorithm_2}
     Let $m:= m_2-m_1$ and let  $\xi = (m+\beta_1, m+\beta_2,...,m+\beta_{2r}).$ Then, adjoin the vectors $\alpha, \xi$ to create the vector $(\alpha_1,...,\alpha_{2r}, \xi_1, ..., \xi_{2r})\in [m+h]^{4r}$. Then, proceed with the same argument and map $\Psi$ as before, but this time the vector is of length $4r$, and the output is vector $\Psi((\alpha, \xi)) =:\gamma \in [m+h]^{2k},  k \leq 2r$.  Notice that $\prod_{i=1}^{2r}(m_1+\alpha_i)(m_2+\beta_i)$ is a square iff $\prod_{i=1}^{2k}(m_1+\gamma_i)$ is. 
\end{algorithm}As before, we follow with case by case analysis. 
\begin{itemize}
    \item Assume $k \geq 2$. Then, again by Corollary \ref{evertse_corollary} we can bound the number of solutions to the diophantine equation 
    \begin{equation*} 
     Y^2 = f(m_1) = (m_1+\gamma_1)...(m_1+\gamma_{2k}).
    \end{equation*}As above, we use Number Theory Fact \ref{evertse_corollary} to bound the number of these solutions by 
    \begin{align*}
     &\ll e^{9\log 7\omega(D(f))} \ll \lp(e^{(\log D(f))}\rp)^{9\log 7/\log \log D(f)} \\   
     &\ll \lp((g(Q)+h)^{8r^2}h^{8r^2}\rp)^{9\log 7/\log \log \lp((g(Q)+h)^{8r^2}h^{8r^2}\rp)} \\
     &\ll_{\epsilon, r} (g(Q))^{ \epsilon},
    \end{align*} for any $\epsilon>0$.
    \item Assume $k=1$. This works exactly as before, so amount of solutions $m_1$ to this equation can also be bounded by $(g(Q))^{\epsilon}$. 
    \item Assume $k=0$. In this case, note that if $|m| \geq h+1$ then $\prod_{i=1}^{2r}(m_1+\alpha_i)$ and $\prod_{i=1}^{2r}(m_2+\beta_i)$ both are perfect squares, each with at most $r$ distinct coefficients (since $k=0$). Therefore, the number of vectors $\alpha, \beta$ for which both products are perfect squares for all pairs of $m_1, m_2$ is exactly $\mu_{2r}^2(h-\theta r)^{2r}$. If $|m| \leq h$ then even if we assume that there is a solution for any vectors $\alpha,\beta$ and any $m_2$, but for each such choice there is at most $O(h)$ choices for $m_1$. 
\end{itemize}
Now, notice similarly to before, by orthogonality of Rademacher random multiplicative functions that the expectation for non-square products will have null contribution. Using \eqref{expectation_1} and summing over all possible $\alpha, \beta$ for each case discussed after Algorithm \ref{algorithm_2} implies that we can bound the original expectation \eqref{big_expectation} by
\begin{align*}
     \mu_{2r}^2(h-\theta r)^{2r} &+O\lp((g(Q))^{-1+\epsilon + 4(r^2+1)/2500r^2}\rp) \\
     &- 2\mu_{2r}^2(h-\theta r)^{2r} + O\lp((g(Q)^{-1+\epsilon+3r^2/2500r^2}\rp) \\
     &+ \mu_{2r}^2(h-\theta r)^{2r} \ll (g(Q))^{-1+1/300}.
\end{align*} This concludes dealing with bounds on expectations. What is left now is to complete the proof of our Lemma is to show that the term

\begin{equation}
    \frac{(g(Q)+h)^{2r}}{Q^{0.005}}\lp(\sum_{n \leq (g(Q)+h)^{2r}}|a_n|\rp)^2
\end{equation} goes to zero at least as quickly. Notice that for $n>1, |a_n|$ can be explicitly described as the count of representations of $n$ as a product $\prod_{i=1}^{2r}(m+\alpha_i)$ for $ 1 \leq m \leq g(Q),  \alpha \in [h]^{2r}$, later normalised by $\frac{1}{g(Q)}$. Since every choice of $\alpha$ and $m$ represents some $n$, their sum (still averaged over all $m$) is easily seen to be 
\begin{equation*}
     \sum_{1<n \leq (g(Q)+h)^{2r}}|a_n| = h^{2r}.
\end{equation*}Additionally, we pick up the $-\mu_{2r}(h-\theta r)^r$ term for $n=1$, so that 
\begin{equation*}
     \frac{(g(Q)+h)^{2r}}{Q^{0.005}}\lp(\sum_{n \leq (g(Q)+h)^{2r}}|a_n|\rp)^2 \ll  \frac{(g(Q)+h)^{2r} (g(Q))^{2r/(2500r^2)}}{Q^{0.005}}\ll \frac{(g(Q))^{2r(1+1/(2500r^2))}}{Q^{0.005}},
\end{equation*}which is of sufficiently small order. This finishes our argument for $2r$. Notice that in case $2r-1,$ the reduction algorithms never give $k=0$ as $2r-1$ is odd and so the term with $\mu$ doesn't arise, which proves the lemma fully. 
\end{proof}

This implies the following Corollary, which can be viewed as an equivalent to Chebyshev's inequality.

\begin{corollary}\label{Chebyshev}
   Let $r$ be a fixed positive integer. Let $g$ be an increasing, divergent function such that 
    \begin{itemize}
        \item $g'(c)=O\lp((g(Q))^{0.99}Q^{-0.01}\rp), \forall c \in [Q,Q+Q^{0.01}]$ as $Q \ra \infty$,
        \item $g(Q) \ll Q^{\delta}, \forall \delta>0$.
    \end{itemize}For each prime $q \in [Q, Q+Q^{0.01}]$ choose $h_q$ such that  
    \begin{equation*}
        r \leq h_q \leq g(q)^{\frac{1}{2500r^2}}.
    \end{equation*}Define
    \begin{equation*}
        \EE_1(Q; g, r): = \lp\{q \in \PP\cap[Q,Q+Q^{0.01}]: \lp|g(Q)^{-1}\sum_{m \leq g(Q)}S_{h_q}^{2r}(m,q)-\mu_{2r}(h_q-\theta_qr)^r\rp| \geq \frac{1}{g(q)^{1/8}}\rp\}
    \end{equation*}and 
    \begin{equation*}
        \EE_2(Q; g, r): = \lp\{q \in \PP\cap[Q,Q+Q^{0.01}]: \lp|g(Q)^{-1}\sum_{m \leq g(Q)}S_{h_q}^{2r-1}(m,q)\rp| \geq \frac{1}{g(q)^{1/8}}\rp\}.
    \end{equation*}Then 
    \begin{equation*}
        \lp|\EE_1 \cup \EE_2\rp| \ll \frac{Q^{0.01}}{(\log Q)g(Q)^{74/100}}.
    \end{equation*} 
\end{corollary}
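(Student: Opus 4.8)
The plan is to deduce the Corollary from Lemma \ref{variance_lemma_prob} by a second-moment (Chebyshev) argument, after two bookkeeping observations that absorb the dependence of $h_q$ and $g(q)$ on $q$.

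First I would record that the hypotheses on $g$ force $g(q) \asymp g(Q)$ uniformly for $q \in [Q, Q+Q^{0.01}]$: integrating $g'(c) = O(g(Q)^{0.99}Q^{-0.01})$ over $[Q,q]$ and using monotonicity of $g$ gives $g(Q) \leq g(q) \leq g(Q) + O(g(Q)^{0.99}) \ll g(Q)$. In particular $g(q)^{-1/8} \gg g(Q)^{-1/8}$, we have $g(Q+Q^{0.01})^{\frac{1}{2500r^2}} \ll g(Q)^{\frac{1}{2500r^2}}$, and every integer $h_q$ with $r \leq h_q \leq g(q)^{\frac{1}{2500r^2}}$ is an admissible value of $h$ in Lemma \ref{variance_lemma_prob}; moreover there are only $O\big(g(Q)^{\frac{1}{2500r^2}}\big)$ possible values of $h_q$ as $q$ ranges over the interval. (If $g(Q+Q^{0.01})^{\frac{1}{2500r^2}} < r$ the Corollary is vacuous, so I may assume otherwise, which holds for $Q$ large since $g \ra \infty$.)

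Next I would remove the $q$-dependence of the interval length. Since every squared deviation is non-negative, splitting the sum over primes $q$ according to the value of $h_q$ and then enlarging each inner sum to all primes in $[Q,Q+Q^{0.01}]$ gives
\[
\sum_{\substack{Q \leq q \leq Q+Q^{0.01}\\ q \text{ prime}}} \lp(\tfrac{1}{g(Q)}\sum_{m \leq g(Q)} S_{h_q}^{2r}(m,q) - \mu_{2r}(h_q - \theta_q r)^r\rp)^2 \leq \sum_{r \leq h \leq g(Q+Q^{0.01})^{\frac{1}{2500r^2}}}\ \sum_{\substack{Q \leq q \leq Q+Q^{0.01}\\ q \text{ prime}}} \lp(\tfrac{1}{g(Q)}\sum_{m \leq g(Q)} S_{h}^{2r}(m,q) - \mu_{2r}(h - \theta r)^r\rp)^2 .
\]
By the first display of Lemma \ref{variance_lemma_prob} each inner sum is $\ll \tfrac{Q^{0.01}}{\log Q}\, g(Q)^{-1+1/300}$, and multiplying by the $O\big(g(Q)^{\frac{1}{2500r^2}}\big)$ admissible $h$ and using $\tfrac{1}{2500r^2} \leq \tfrac{1}{2500}$ bounds the whole quantity by $\ll \tfrac{Q^{0.01}}{\log Q}\, g(Q)^{-1 + 1/300 + 1/2500}$. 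The same bound holds for the analogous sum with $S_h^{2r-1}$ in place of $S_h^{2r} - \mu_{2r}(h-\theta r)^r$, using the second display of the Lemma.

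Finally I would invoke Chebyshev. Each $q \in \EE_1$ contributes at least $g(q)^{-1/4} \gg g(Q)^{-1/4}$ to the left-hand side of the last display, so $|\EE_1|\, g(Q)^{-1/4} \ll \tfrac{Q^{0.01}}{\log Q}\, g(Q)^{-1 + 1/300 + 1/2500}$, whence $|\EE_1| \ll \tfrac{Q^{0.01}}{\log Q}\, g(Q)^{-3/4 + 1/300 + 1/2500}$; since $-3/4 + 1/300 + 1/2500 < -74/100$ this gives $|\EE_1| \ll \tfrac{Q^{0.01}}{(\log Q)\, g(Q)^{74/100}}$, and the same argument with the second display of the Lemma gives the same bound for $|\EE_2|$. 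Then $|\EE_1 \cup \EE_2| \leq |\EE_1| + |\EE_2|$ completes the proof. There is no genuine obstacle here: all the analytic content sits in Lemma \ref{variance_lemma_prob}, and the only points needing care are the transfer of $g(q) \asymp g(Q)$ to the whole short interval — so that the per-$q$ normalisation $g(q)^{-1/8}$ can be compared with the $g(Q)^{-1/8}$ scale that appears after squaring — and checking that summing over the $O\big(g(Q)^{\frac{1}{2500r^2}}\big)$ admissible lengths $h_q$ costs only a negligible power of $g(Q)$, which is comfortably within the slack between $3/4 - 1/300 - 1/2500$ and $74/100$.
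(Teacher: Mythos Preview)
Your proposal is correct and follows essentially the same approach as the paper: split the sum over primes according to the value of $h_q$, enlarge each inner sum to all primes in $[Q,Q+Q^{0.01}]$, apply Lemma \ref{variance_lemma_prob} to each, and then use Chebyshev via the lower bound $g(q)^{-1/4}\gg g(Q)^{-1/4}$ on the contribution of each $q\in\EE_i$. Your explicit verification that $g(q)\asymp g(Q)$ on the short interval, and that the resulting exponent $-3/4+1/300+1/2500$ is below $-74/100$, matches the paper's (more implicit) bookkeeping.
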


\begin{proof}
    We have 
    \begin{align*}
        &\sum_{q \in \EE_1}g(q)^{1/4}\lp(\frac{1}{g(Q)}\sum_{1 \leq m \leq g(Q)}S_{h_q}^{2r}(m,q) - \mu_{2r}(h_q-\theta_qr)^r\rp)^2\\
        &\ll g(Q)^{1/4} \sum_{\substack{Q \leq q \leq Q+Q^{0.01}\\ q \text{ prime}}}\lp(\frac{1}{g(Q)}\sum_{1 \leq m \leq g(Q)}S_{h_q}^{2r}(m,q) - \mu_{2r}(h_q-\theta_qr)^r\rp)^2.\\ 
       & =g(Q)^{1/4}\sum_{h \leq g(Q+Q^{0.01})^{1/(2500r^2)}}\sum_{\substack{Q \leq q \leq Q+Q^{0.01} \\ q \text{ prime }\\ h_q=h}}\lp(\frac{1}{g(Q)}\sum_{1 \leq m \leq g(Q)}S_{h}^{2r}(m,q) - \mu_{2r}(h-\theta r)^r\rp)^2\\
       &\ll g(Q)^{1/4} g(Q)^{1/(2500r^2)}\sum_{\substack{Q \leq q \leq Q+Q^{0.01} \\ q \text{ prime }}}\lp(\frac{1}{g(Q)}\sum_{1 \leq m \leq g(Q)}S_{h}^{2r}(m,q) - \mu_{2r}(h-\theta r)^r\rp)^2\\
       &\ll \frac{Q^{0.01}}{\log Q g(Q)^{74/100}},
    \end{align*}where the last line follows in an obvious way from Lemma \ref{variance_lemma_prob}. At the same time, 
    \begin{align*}
            &\sum_{q \in \EE_1}g(q)^{1/4}\lp(\frac{1}{g(q)}\sum_{1 \leq m \leq g(q)}S_h^{2r}(m,q) - \mu_{2r}(h_q-\theta_qr)^r\rp)^2 \gg \lp|\EE_1\rp|,
    \end{align*}which implies
    \begin{equation*}
        \lp|\EE_1\rp| \ll \frac{Q^{\eta}}{(\log Q)g(Q)^{74/100}}.
    \end{equation*} The same result for $\EE_2$ implies the Corollary. 
\end{proof}

\section{Proof of Theorem \ref{New_Theorem} and Corollary \ref{New_Corollary}}
In this section we put it all together in a method of moments argument to prove Theorem \ref{New_Theorem}. 
\begin{proof}
    Consider $\eta \in [0.01,1]$. We begin with an observation, that if a subset $\BB \subseteq \PP$ of primes is $\eta_1$-sufficient, it is also $\eta_2$-sufficient for any $\eta_1<\eta_2 \leq 1$. One can prove it with a simple covering argument of the set $[X,X+X^{\eta_2}]$ with shorter intervals of form $[Q,Q+Q^{\eta_1}]$. Therefore, it is enough for our purposes to prove the theorem when $\eta=0.01$. For each $k \in \N$ set $R_k = 2^k$. By Corollary \ref{Chebyshev} there exists an increasing sequence of integers $Q_k$ such that for each $k$, for all $Q \geq Q_k$ and all $r \leq R_k$ 
    \begin{equation}\label{good_primes_1}
        \lp|\frac{1}{g(Q)}\sum_{m \leq g(Q)}S_{h_q}^{2r}(m,q)-\mu_{2r}(h_q-\theta_q r)^r\rp|< \frac{1}{g(q)^{1/8}}
    \end{equation}and 
    \begin{equation}\label{good_primes_2}
         \lp|\frac{1}{g(Q)}\sum_{m \leq g(Q)}S_{h_q}^{2r-1}(m,q)\rp|< \frac{1}{g(q)^{1/8}},
    \end{equation}for all but 
    \begin{equation*}
       \leq \frac{Q^{0.01}}{(\log Q)g(Q)^{7/10}}
    \end{equation*} many primes in the interval $[Q, Q+Q^{0.01}].$ To apply the Corollary, we use the assumption that $h_q \ra \infty$ as $q \ra \infty$ of Theorem \ref{New_Theorem}. Notice that we used some factors of $g(Q)$ to get rid of Vinogradov notation in place of an inequality, so with choice $g(Q) = (\log Q)^A, A>0$ we could preserve the property of being $\eta$-strong of \cite{BasakNathZaharescu}. For any $Q_k <Q_{k+1}$ let $\BB_{k}$ be the set of primes $q \in [Q_k, Q_{k+1}]$, such that $q \in [N,N+N^{0.01}]$ for some $N \in [Q_k,Q_{k+1}]$ and \eqref{good_primes_1},\eqref{good_primes_2} hold with $Q=N$ for all $r \leq R_k$. Then for all $q \in \BB_{k},$ and $j \in \N$ we have that there exists $N$ as described before, such that
    \begin{align*}
      &g(q)^{-1}\sum_{m \leq g(q)}\lp((h_q)^{-1/2}S_{h_q}(m,q)\rp)^j \\  
      &= (g(N)+g'(c)(q-N))^{-1} \sum_{m \leq g(N)+g'(c)(q-N)}\lp((h_{q})^{-1/2}S_{h_q}(m,q)\rp)^j,
      \end{align*}for some $c \in (N,N+N^{0.01})$. By Corollary \ref{Chebyshev}, assumptions on $g'$, \eqref{good_primes_1} and \eqref{good_primes_2}, this is equal to
      \begin{align}\label{clean_up_equation}
      &= g(N)^{-1}(1+o(g(N)^{-0.01}))^{-1}\sum_{m \leq g(N)(1+o(g(N)^{-0.01}))}\lp((h_{q})^{-1/2}S_{h_{q}}(m,q)\rp)^j \\
      &=g(N)^{-1}(1+o(1))^{-1}\sum_{m \leq g(N)}\lp((h_{q})^{-1/2}S_{h_{q}}(m,q)\rp)^j + o(1) \underset{}{\ra} \mu_j,
      \end{align}as $k \ra \infty$, where $\mu_j$ is the $j$'th moment of a standard Gaussian random variable. Define 
    \begin{align*}
        \BB &:= \cup_{k=1}^{\infty}\BB_k.
    \end{align*}We can immediately see that by construction, $\BB$ is an $0.01$-sufficient set of prime, so also $\eta$-sufficient for all $\eta \in[0.01,1]$. Consider a sequence $(q_n) \in \BB$. Then, notice that by \eqref{clean_up_equation} we have for any fixed positive $j$
    \begin{equation}\label{convergence_to_moments}
        g(q_n)^{-1}\sum_{m \leq g(q_n)}\lp((h_{q_n})^{-1/2}S_{h_{q_n}}(m,q_n)\rp)^j  \ra \mu_j,
    \end{equation}as $n \ra \infty$. For convenience of presentation define
    \begin{equation*}
        \NN_{q}(s) : = \lp|\lp\{1 \leq m \leq g(q): S_{h_q}(m,q) \leq s\rp\}\rp|. 
    \end{equation*}This is a non-decreasing function of $s$, constant except for discontinuities at certain integral values of $s$. Also 
    \begin{equation*}
      \NN_q(s) =   \left\{ \begin{array}{rcl}
 0 & \mbox{for}
 & s< -h_q \\ \lfloor g(q)\rfloor & \mbox{for} & s > h_q.
 \end{array}\right.
    \end{equation*}We can recognise
    \begin{equation*}
        \MM_{g,q,h_q}(\lambda) = \frac{1}{g(q)}\NN(\lambda h_q^{1/2}).
    \end{equation*} Note that for any $q,h_q$ this defines a natural cumulative distribution function $\Psi_q$ on the real line, equipped with a probability measure. The goal now, is to show that as $n \ra \infty$ for $(q_n) \in \BB$, this distribution converges weakly to a Gaussian. 
    Now, rewriting \eqref{convergence_to_moments} with the new notation gives 
    \begin{equation*}
        g(q_n)^{-1}\sum_{s=-h_{q_n}}^{h_{q_n}}(h_{q_n}^{-1/2}s)^j\lp(\NN_{q_n}(s)-\NN_{q_n}(s-1)\rp) = \int_{-\infty}^{\infty}t^jd\Psi_{q_n} \ra \mu_j, 
    \end{equation*} as $n \ra \infty$ for any fixed positive $j$.
    This implies, that the random variable $X$ coming from the distribution associated to the law of distribution $\P(X \leq \lambda) = M_{g,q,h_q}(\lambda)$ has identical moments to those of the standard normal. From a well-known probabilistic result, we know that if moments of a sequence of random variables converge to moments of the standard normal, this completely determines their limiting distribution as that of a standard normal. In fact, this is true also in more general setting, where the limiting moments are finite and unique to some probability distribution. Therefore, we have the convergence of cumulative distribution functions 
    \begin{equation}
        \MM_{g, q_n, h_{q_n}}(\lambda) \ra \frac{1}{\sqrt{2\pi}}\int_{-\infty}^{\lambda}e^{-t^2/2}dt,
    \end{equation}which completes the proof of Theorem \ref{New_Theorem}. To prove Corollary  \ref{New_Corollary} simply apply Number Theory Fact \ref{baker_harman_pintz}.
\end{proof}

\section{Results for all the primes}
For the remainder of this section let us forget about previous assumptions and let $g(q)$ be an arbitrary increasing, divergent function. Davenport and Erd\H{o}s \cite{Davenport2022TheDO} and Lamzouri \cite{Lamzouri2011TheDO} establish \eqref{distributional_convergence} for all primes provided $h_q \ra \infty,$ but $h_q = q^{o(1)},$ with the starting points of the sum being uniform in $\{1,...,q\}$, while our method establishes \eqref{distributional_convergence} for almost all the primes, with $h_q \ra \infty, h_q =g(q)^{o(1)}$ and the intervals of size $g(q)$, for any function $g(q)$ as in our results. In fact, if $g(q)$ is small enough (e.g. of size $(\log q)^{1/2}$), then there exist primes $q$ for which $(\tfrac{n}{q})=1$ for all $n \leq g(q),$ which makes it impossible to improve Theorem \ref{New_Theorem} to cover the whole set of primes. This suggests, that there is some threshold for the size of the interval $g(q)$, where \eqref{distributional_convergence} changes from convergence for almost all, to all the primes. This problem takes us back to the reason why we even considered averaging over primes in intervals: To get distributional results for sums as short as ours, there is the need to employ cancellations coming from changes in sign of $(\tfrac{n}{q})$, when $n$ is not a perfect square and $q$ varies. If we consider just one prime $q$, we are forced to understand the structure and get necessarily good cancellations in sums of the form 

\begin{equation}
    \sum_{1 \leq m \leq g(q)}S_{h_q}^{2r}(m) = \sum_{1 \leq m \leq g(q)}\sum_{\alpha_1=1}^{h_q}\cdots \sum_{\alpha_{2r}=1}^{h_q}\lp(\frac{(m+\alpha_1)(m+\alpha_2)...(m+\alpha_{2r})}{q}\rp),
\end{equation}where 
\begin{equation*}
    S_{h_q}(x): = \sum_{x \leq n \leq x+h_q}\lp(\frac{n}{q}\rp).
\end{equation*}If we consider fixed $\alpha \in [h_q]^{2r}$, as in the proof of Lemma \ref{variance_lemma_prob}, we see that the main contribution comes from $\alpha$, which all components appear inside the vector an even amount of times, in which case $f(m): = \prod_{i=1}^{2r}(m+\alpha_i)$ is a square of a polynomial. This is case $3$ of either of the algorithms of that proof. On the other hand, when dealing with the case when this product is not of this form, there is no prime averaging to come to our rescue, but we still need to show that 

\begin{equation*}
    \sum_{1 \leq m \leq g(q)}'S_{h_q}^{2r}(m) = \sum_{1 \leq m \leq g(q)}'\sum_{\alpha_1=1}^{h_q}\cdots \sum_{\alpha_{2r}=1}^{h_q}\lp(\frac{(m+\alpha_1)(m+\alpha_2)...(m+\alpha_{2r})}{q}\rp) = o(g(q)h_q^r),
\end{equation*}where the sums above are over $\alpha \in [h_q]^{2r}$ for which $f(m)$, as defined above is not a square of a polynomial. In the original paper of Davenport and Erd\H{o}s, they use the Weil bound \cite{Weil1948SurLC}, which allows them to show that if $g(q)=q$, the above holds. We have 
\begin{equation*}
    \sum_{1 \leq m \leq q}\lp(\frac{(m+n_1)(m+n_2)...(m+n_{2r})}{q}\rp) = O(q^{1/2}),
\end{equation*}for these $n$'s that do not contribute to case $3$ in any of algorithms in Lemma \ref{variance_lemma_prob}. In this section, we use a fact about incomplete character sums, that follows from the Weil bound \cite{Weil1948SurLC} and show that the result on distribution of sums of Legendre character for all the primes, holds true also if $g(q) \gg (q^{1/2}\log q)^{1/(1-\epsilon)}$, for any $\epsilon>0$. The main ingredient that we use is \cite[Theorem ~ 2]{Maduit_Sarkozy_1997}.

\begin{fact}{(Mauduit - S\'arközy, 1997)}\label{incomplete_sum_lemma}\\
    Let $q$ be a prime number, $\chi$ a non-principal character modulo $q$ of order $d$, and $f(x)\in \F_q[x]$ have degree $k$ and a factorization $$f(x) = b(x-x_1)^{d_1}(x-x_2)^{d_2}...(x-x_s)^{d_s},$$ with $x_i \neq x_j$ for $i\neq j$ in $\overline{\F_q}$. Then, if $(d, d_1, d_2,...,d_s) =1 $, then for any real numbers $X_, 0 < Y \leq q$ we have 
    \begin{equation}
        \lp| \sum_{X < n \leq X+Y } \chi(f(n))\rp| < 9 k q^{1/2}\log q.
    \end{equation}
\end{fact}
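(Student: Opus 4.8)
The plan is to prove this by the classical \emph{completion} technique, reducing the incomplete sum to complete twisted character sums that are controlled by Weil's theorem (the reference \cite{Weil1948SurLC} is already available to us). First I would regard the interval $(X,X+Y]$ as a set of residue classes modulo $q$, which is legitimate since $Y\le q$, and expand its indicator in additive characters: writing $e(\theta):=e^{2\pi i\theta}$,
\[
\sum_{X<n\le X+Y}\chi(f(n)) \;=\; \frac1q\sum_{a\bmod q}\lp(\sum_{X<t\le X+Y}e(-at/q)\rp)W(a),\qquad W(a):=\sum_{n\bmod q}\chi(f(n))\,e(an/q).
\]
The first factor is a geometric sum, bounded by $Y\le q$ when $a\equiv 0$ and by $\tfrac{1}{2\lVert a/q\rVert}$ otherwise (using $\lvert\sin\pi x\rvert\ge 2\lVert x\rVert$), so that $\sum_{a\bmod q}\bigl\lvert\sum_{X<t\le X+Y}e(-at/q)\bigr\rvert \ll q\log q$ by the standard estimate $\sum_{1\le a<q}\lVert a/q\rVert^{-1}\ll q\log q$.

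Second, I would bound $W(a)$ uniformly by $O(k\,q^{1/2})$. For $a\equiv 0$ this is Weil's bound for $\sum_n\chi(f(n))$; the hypothesis $(d,d_1,\dots,d_s)=1$ is exactly what guarantees that $f$ is not a scalar times a perfect $d$-th power of a polynomial in $\F_q[x]$ — equivalently, that $\chi\circ f$ is a nontrivial multiplicative character of the function field — which is the non-degeneracy condition under which Weil gives $\lvert W(0)\rvert\le (s-1)q^{1/2}\le (k-1)q^{1/2}$. For $a\not\equiv 0$, $W(a)$ is a mixed multiplicative--additive sum, and the twist by a nontrivial additive character of $\Z/q\Z$ (which has order $q$, automatically coprime to the order $d\mid q-1$ of $\chi$) already forces non-degeneracy, so Weil's theorem for sums of the form $\sum_x\chi(f(x))\psi(x)$ yields $\lvert W(a)\rvert\le k\,q^{1/2}$. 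Combining the two steps,
\[
\lp\lvert\sum_{X<n\le X+Y}\chi(f(n))\rp\rvert \;\le\; \frac1q\cdot k q^{1/2}\cdot\sum_{a\bmod q}\lp\lvert\sum_{X<t\le X+Y}e(-at/q)\rp\rvert \;\ll\; k\,q^{1/2}\log q,
\]
and a careful accounting of the implied constants (the $a=0$ term contributes $(k-1)q^{1/2}$ after dividing by $q$, the remaining terms $\le k q^{1/2}\log q$) keeps the total below $9k\,q^{1/2}\log q$ for all $q\ge 2$.

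The \textbf{main obstacle} is the second step: one must pin down precisely that the gcd condition $(d,d_1,\dots,d_s)=1$ is the correct hypothesis making $\chi(f(x))$ ``Weil-admissible,'' and extract fully explicit constants from Weil's estimate uniformly in $q$ in both the $a=0$ case (genuine multiplicative sum, bound in terms of $s-1$ or $k-1$) and the $a\ne 0$ case (mixed sum, bound in terms of $k$); the literature records several slightly different normalizations, and one has to select the version that, together with the $q\log q$ coming from the completion, fits under the stated constant $9$. A secondary technical point is justifying the reduction to residues modulo $q$ when $Y$ is close to $q$, and noting that the at most $k$ zeros of $f$ modulo $q$ (where $\chi(f(n))=0$) contribute nothing and so only help the bound.
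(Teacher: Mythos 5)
Your completion-plus-Weil argument is correct and is essentially the standard proof of this estimate; the paper itself does not prove this Fact but imports it directly from Mauduit and S\'ark\"ozy \cite{Maduit_Sarkozy_1997}, whose own proof proceeds exactly as you describe (expand the interval indicator in additive characters, bound the resulting complete mixed sums $\sum_{n}\chi(f(n))e(an/q)$ via Weil, and control the geometric factors by $\sum_{a}\min(Y,\tfrac{1}{2\lVert a/q\rVert})\ll q\log q$). Your identification of the gcd condition $(d,d_1,\dots,d_s)=1$ as the non-degeneracy hypothesis for the $a\equiv 0$ term, and the observation that the nontrivial additive twist handles $a\not\equiv 0$ automatically, are both the right points to make.
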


The proof of the Theorem \ref{main_theorem_all_primes} is essentially the same method of moments proof as that of \cite{Davenport2022TheDO} and is much simpler than the one presented for almost all the primes before. 
\subsection{Proof of Theorem \ref{main_theorem_all_primes}}

\begin{proof}
    The main idea of the method of moments remains unchanged, except this time we do not allows ourselves to average over different primes. We follow the original argument due to Davenport and Erd\H{o}s \cite{Davenport2022TheDO}. First, for any $r \in \N_{>0}$ and any sufficiently large prime $q$ we want to show that 
    \begin{equation}
        \frac{1}{g(q)}\sum_{m \leq g(q)}S_{h_q}^{2r}(m,q) - \mu_{2r}(h-\theta_2r)^r
    \end{equation} and 
    \begin{equation}
        \frac{1}{g(q)}\sum_{m \leq g(q)}S_{h_q}^{2r-1}(m,q)
    \end{equation}can be bounded sufficiently well in terms of some function of $q$. Using that $h_q \ra \infty$ as $q \ra \infty$ we can assume that $r \leq h$. We first deal with the case when the exponent of the sum is $2r$. Write 
    \begin{equation*}
        \sum_{m \leq g(q)}S_{h_q}^{2r} = \sum_{m \leq g(q)}\sum_{\alpha_1=1}^{h_q}\cdots \sum_{\alpha_{2r}=1}^{h_q}\lp(\frac{(m+\alpha_1)(m+\alpha_2)...(m+\alpha_{2r})}{q}\rp)
    \end{equation*}First, consider only these $\alpha \in [h_q]^{2r}$ for which the product above is a square of a polynomial in $\F_q$. These are exactly the $\alpha$ that constitute the case $3$ of the algorithms described in the proof of Lemma \ref{variance_lemma_prob}. For these $\alpha$, using the argument of Lemma \ref{variance_lemma_prob}, there exists $\theta_1 \in [0,1]$ such that 
    \begin{equation}\label{squares_eqn}
        \sum_{m \leq g(q)}\lp(\frac{(m+\alpha_1)(m+\alpha_2)...(m+\alpha_{2r})}{q}\rp) = g(q)- \theta_1r.
    \end{equation} As in the proof of Lemma \ref{variance_lemma_prob}, we estimate the number of $\alpha \in [h_q]^{2r}$ like this to be $K(r,h_q) = \mu_{2r}(h_q-\theta_2 r)^r$ for some $\theta_2 \in [0,1]$.
    For any other $\alpha$ under consideration, we perform on it the algorithm of Lemma \ref{variance_lemma_prob}. If we define $\gamma \in [h_q]^{2k}$ to be the newly acquired vector with $k \leq r$, the problem becomes that of dealing with estimating the sum 
    \begin{equation}\label{non_squares_all_primes}
        \sum_{m \leq g(q)}\lp(\frac{(m+\gamma_1)(m+\gamma_2)...(m+\gamma_{2k})}{q}\rp).
    \end{equation}If we write
    \begin{equation*}
        f(m) = (m+\gamma_1)(m+\gamma_2)...(m+\gamma_{2k})
    \end{equation*}then \eqref{non_squares_all_primes} is susceptible to the Number Theory Fact \ref{incomplete_sum_lemma}, which gives us 
    \begin{equation}\label{non_squares_eqn}
        \lp|\sum_{m \leq g(q)}\lp(\frac{(m+\alpha_1)(m+\alpha_2)...(m+\alpha_{2r})}{q}\rp)\rp| = \lp|\sum_{m \leq g(q)}\lp(\frac{(m+\gamma_1)(m+\gamma_2)...(m+\gamma_{2k})}{q}\rp)\rp| < 18kq^{1/2}\log q.
    \end{equation} Therefore, combining \eqref{squares_eqn}, \eqref{non_squares_eqn} and the triangle inequality we get 
    \begin{equation}\label{even_exponent_case_all_primes}
        \sum_{m \leq g(q)}S_{h_q}^{2r}(m,q) - \mu_{2r}(h_q-\theta_2r)^rg(q) \ll h_q^{2r}q^{1/2}\log q.
    \end{equation}Similarly, since in the case when the exponent of the sum $S_h(m,q)$ is $2r-1$, the product 
    \begin{equation*}
        f(m)= (m+\alpha_1)(m+\alpha_2)...(m+\alpha_{2r-1})
    \end{equation*} is never a square of a polynomial in $\F_p$, the second part of the above argument shows
    \begin{equation}\label{odd_exponent_case_all_primes}
        \sum_{m \leq g(q)}S_{h_q}^{2r-1}(m,q)  \ll h_q^{2r-1}q^{1/2}\log q.
    \end{equation}
    Therefore, combining \eqref{even_exponent_case_all_primes} and \eqref{odd_exponent_case_all_primes} we have that for all $r \in \N_{>0}$ we have
    \begin{equation}
        \frac{1}{g(q)} \sum_{m \leq g(q)}(h_q^{-1/2}S_h(m,q))^{r} = \mu_r + O(g(q)^{-1}h_q^{r/2}q^{1/2}\log q ),
    \end{equation}where $\mu_r$ is the $r'$th moment of the standard normal, so that as long as $h_q^{r}q^{1/2}\log q = o(g(q))$, we let $q \ra \infty$ to give 
    \begin{equation*}
        \frac{1}{g(q)}\sum_{m \leq g(q)}(h_q^{-1/2}S_h(m,q))^{r} \ra \mu_r,
    \end{equation*}as $q \ra \infty$, which by method of moments finishes the proof. 
\end{proof}

One might hope that somehow the state of the art of distributional results should reflect the state of bounds on the actual size of the least NQR, in which case we should be able to deal with $g(q) \approx q^{1/4},$ following the Burgess' bound \cite{Burgess_1957}. In the argument as above, an improvement of this type would be used in \eqref{non_squares_eqn}. Currently, it is bounded using Number Theory Fact \ref{incomplete_sum_lemma}, which relies on the Weil bound. Unfortunately, generalising the results of Burgess to a case where the sum of characters runs over $f(n)$ instead of $n$, where $f$ is a squarefree polynomial is not an easy task. One can find notable results that are applicable to our situation in the works of Bourgain and Chang \cite{Bourgain_Chang} and Pierce and Xu \cite{Pierce_Xu_2020}. Unfortunately, both of these papers give bounds for \eqref{non_squares_eqn} which grow in powers of $g(q)$ along with the degree of the polynomial $f$. This would possibly allow us to set $g(q) = q^{1/2-o(1)}$, but extending Theorem \ref{main_theorem_all_primes} to $g(q)$ of smaller order remains a hard problem. 

\vspace{12pt}
\noindent {\em Acknowledgements.} The author would like to thank his supervisor Adam Harper for recommending this problem, guidance and comments on the previous versions of this paper. 

\vspace{12pt}
\noindent {\em Rights.} For the purpose of open access, the author has applied a Creative Commons Attribution (CC-BY) licence to any Author Accepted Manuscript version arising from this submission.

\bibliographystyle{amsplain}
\bibliography{refs}

\end{document}